\newcommand{\R}{\mathbb{R}}
\newcommand{\Sph}{\mathbb{S}}
\newcommand{\del}{\partial}
\newcommand{\Ric}{\operatorname{Ric}}
\renewcommand{\div}{\operatorname{div}}
\newcommand{\Rm}{\operatorname{Rm}}
\newcommand{\tr}{\operatorname{tr}}
\newcommand{\dvol}{\mathrm{dvol}}
\newcommand\inner[2]{\langle #1, #2 \rangle}
\newcommand{\norm}[1]{\left\lVert #1 \right\rVert}
\renewcommand{\O}{\mathcal{O}}
\renewcommand{\L}{\mathcal{L}}
\newcommand{\Z}{\mathbb{Z}}
\newcommand{\CP}{\mathbb{CP}}
\newcommand{\I}{\mathcal{I}}
\newcommand{\C}{\mathbb{C}}
\renewcommand{\epsilon}{\varepsilon}
\newtheorem{theorem}{Theorem}[section]
\newtheorem{corollary}[theorem]{Corollary}
\newtheorem{lemma}[theorem]{Lemma}
\newtheorem{definition}[theorem]{Definition}
\newtheorem{proposition}[theorem]{Proposition}
\newtheorem{remark}[theorem]{Remark}
\numberwithin{equation}{section}
\setlist[itemize]{leftmargin=3em}
\setlist[enumerate]{leftmargin=3em}
\title[On steady and expanding Ricci solitons with asymptotic symmetries]{On steady and expanding Ricci solitons with asymptotic symmetries}
\author[Michael B. Law]{Michael B. Law}
\address{MIT, Department of Mathematics, 77 Massachusetts Avenue, Cambridge, MA 02139, USA.}
\email{mikelaw@mit.edu}
\begin{document}

\begin{abstract}
    We establish a symmetry principle for asymptotically cylindrical steady gradient Ricci solitons (GRSs) and asymptotically conical expanding GRSs with homogeneous links. Using this, we show that the Bryant steady soliton is the unique asymptotically cylindrical steady GRS that has a round spherical link and satisfies a particular quantitative rigidity condition. A similar characterization is proved for Bryant's expanding solitons. Finally, we establish a global symmetry result for GRSs which exhibit the aforementioned asymptotics with quotient-Berger sphere asymptotic links.
\end{abstract}

\maketitle

\setcounter{tocdepth}{1}
\tableofcontents

\section{Introduction}

Let $(M^n,g)$ be a complete $n$-dimensional smooth Riemannian manifold, and let $f: M \to \R$ be a smooth function. The triple $(M^n,g,f)$ is called a \emph{gradient Ricci soliton} (GRS) if
\begin{align} \label{eq:RS}
    \Ric = \nabla^2 f + \lambda g,
\end{align}
where $\lambda$ determines the soliton type: \emph{shrinking} ($\lambda=\frac{1}{2}$), \emph{steady} ($\lambda = 0$), or \emph{expanding} ($\lambda=-\frac{1}{2}$). GRSs generate self-similar solutions of the Ricci flow and are fundamental to the analysis of singularities. In particular, shrinking and steady GRSs are special instances of \emph{ancient solutions}, many of which are \emph{singularity models}: blowup limits of Ricci flow on compact manifolds at finite-time singularities. On the other hand, steady and expanding GRSs are expected to play a key role in desingularization by geometric surgeries.

In dimension $3$, all singularity models are \emph{ancient $\kappa$-solutions}: non-flat, $\kappa$-noncollapsed ancient solutions with bounded and nonnegative sectional curvature. The classification of 3-dimensional ancient $\kappa$-solutions was completed by Brendle \cite{brendle-acta} and Brendle--Daskalopoulos--Sesum \cite{bds} in the noncompact and compact cases respectively. On the other hand, expanding GRSs play a less prominent role in dimension 3, and a full classification remains open. For more information on these developments, we refer to the books \cites{chow2023ricci,chowluni} and references therein.

In dimensions $\geq 4$, ancient $\kappa$-solutions no longer model all finite-time singularities of Ricci flow: singularity models only have nonnegative scalar curvature in general.
A full classification therefore appears unfeasible; indeed, even a classification of Einstein 4-manifolds remains distant. Recent developments in mean curvature flow (e.g. \cites{cm-generic,mcf-generic-ccms}) suggest that one should aim to only classify GRSs which are \emph{stable}, since these are thought to govern singularity formation and resolution for \emph{generic} solutions to Ricci flow.

To date, most classification results for GRSs and ancient solutions take place on smooth manifolds under assumptions on both asymptotic geometry and curvature; see for instance \cites{brendle2014rotational,brendle-naff,zz,deng-zhu,chodosh,chodosh-fong,cds}.
Motivated by the discussion above, our goal here is to investigate what remains true in the absence of curvature assumptions. Our results take place on \emph{asymptotically cylindrical} steady GRSs and \emph{asymptotically conical} expanding GRSs, defined below.
For these GRSs, we will prove uniqueness and symmetry results without curvature assumptions, in exchange for an analytical condition on the weighted Lichnerowicz Laplacian \eqref{eq:L-defn}. In accompanying work \cite{law}, we generalize our results to \emph{Ricci-flat ALE spaces}, which themselves form a class of steady GRSs, but are also of independent interest.
\begin{definition} \label{def:acyl-quotient}
    Let $n \geq 3$. Let $\mathcal{S} = \Sph^{n-1}/\Gamma$ be a spherical space form of dimension $n-1$. A steady GRS $(M^n,g,f)$ is called \emph{asymptotically $\mathcal{S}$-cylindrical} if the following holds. Let $\Phi_t$ be the flow of $-\nabla f$. Take an arbitrary sequence $p_m$ of points going to infinity and consider the rescaled flows
    \begin{align}
        \hat{g}^{(m)}(t) = r_m^{-1} \Phi_{r_m t}^*g,
    \end{align}
    with $r_m R(p_m) = \frac{n-1}{2} + o(1)$, where $R(p_m)$ is the scalar curvature at $p_m$. We require that the flows $(M,\hat{g}^{(m)}(t),p_m)$ converge as $m \to \infty$ (without passing to a subsequence) in the Cheeger--Gromov sense to a family of shrinking cylinders $(\mathcal{S} \times \R, \bar{g}(t))$, $t \in (0,1)$, where
    \begin{align} \label{eq:gbar-defn}
        \bar{g}(t) = (n-2)(2-2t) \bar{g}_{\infty} + dz \otimes dz,
    \end{align}
    and $\bar{g}_\infty$ is a round metric on $\mathcal{S}$ with constant sectional curvature 1.
\end{definition}

\begin{definition} \label{def:acon}
    Let $n \geq 3$, and let $(\mathcal{S},\bar{g}_\infty)$ be a closed Riemannian manifold of dimension $n-1$. An expanding GRS $(M^n,g,f)$ is called \emph{asymptotically $(\mathcal{S},\bar{g}_\infty)$-conical} if there is a compact set $K \subset M$, a radius $R_0>0$, a diffeomorphism $\phi: (R_0,\infty) \times \mathcal{S} \to M \setminus K$, and a decay rate $\epsilon > 0$ such that
    \begin{gather}
        |\bar{\nabla}^k(\phi^*g - \bar{g})|_{\bar{g}} \leq \O(r^{-\epsilon-k}) \text{ as } r \to \infty \text{ for all } k \geq 0, \label{eq:acon-diffeo-decay} \\
        f(\phi(r,\theta)) = \frac{r^2}{4} \quad \text{for all } (r,\theta) \in (R_0,\infty) \times \mathcal{S},
    \end{gather}
    where $\bar{g} = dr^2 + r^2 \bar{g}_\infty$ is the conical metric on $(R_0,\infty) \times \mathcal{S}$.
\end{definition}
\begin{remark}
    \begin{itemize}
        \item Brendle's definition of asymptotic cylindricity \cite{brendle2014rotational} also requires that $\frac{\Lambda_1}{r} \leq R \leq \frac{\Lambda_2}{r}$, where $\Lambda_1, \Lambda_2 > 0$ and $r$ is the distance to a fixed point in $M$. It turns out that Definition \ref{def:acyl-quotient} implies this; see \cite{deng-zhu}*{Lemma 6.5} and \cite{CDN}*{\S 3}.
        \item In Definition \ref{def:acyl-quotient}, we require $\mathcal{S}$ to be a spherical space form and $\bar{g}_\infty$ to have constant sectional curvature, but in Definition \ref{def:acon} we require neither.
    \end{itemize}
\end{remark}

For a symmetric 2-tensor $h$ on a GRS $(M^n,g,f)$ we define
\begin{align} \label{eq:L-defn}
    Lh := \Delta h + \nabla_{\nabla f} h + 2\Rm(h),
\end{align}
where $\Rm(h)_{ik} = \Rm_{ijkl} h^{jl}$ is the action of the curvature tensor. The operator $L$ is formally self-adjoint in the space $L^2(e^f)$ of symmetric 2-tensors satisfying $\int_M |h|^2 e^f < \infty$. Recall that $L$ appears in the second variation of Perelman's $\nu$-entropy, and $Lh=0$ is the linearization of the soliton equation \eqref{eq:RS} in a suitable gauge \cite{cao-zhu}. Accordingly, solutions of $Lh=0$ correspond to infinitesimal deformations of $(M,g,f)$ that preserve the soliton condition to first order. If the spectrum of $L$ on $L^2(e^f)$ is strictly positive, then $(M,g,f)$ is said to be \emph{strictly stable}. If $\ker_{L^2(e^f)}(L) = \{0\}$, then $(M,g,f)$ is said to be \emph{rigid}.

\subsection{A symmetry principle}

Our first result is the following symmetry principle. Given a Riemannian manifold $(M,g)$, denote by $\I(M,g)$ its isometry group. Recall that $(M,g)$ is \emph{homogeneous} if $\I(M,g)$ acts transitively. Moreover, if $G$ is a compact Lie group acting on $M$, then the action has \emph{cohomogeneity one} if its generic orbits are hypersurfaces.

\begin{theorem} \label{thm:key-symmetry-2}
    Let $(M^n,g,f)$ be an asymptotically $\mathcal{S}$-cylindrical steady GRS or an asymptotically $(\mathcal{S},\bar{g}_\infty)$-conical expanding GRS.
    Suppose $(\mathcal{S},\bar{g}_\infty)$ is homogeneous, and let $\bar{d} = \dim \I(\mathcal{S},\bar{g}_\infty)$. There exists $d \geq 0$ such that if $\dim \ker_{L^2(e^f)}(L) \leq d$, then there is a compact Lie group $G$ of dimension $\geq \bar{d}-d$ which acts smoothly, effectively, isometrically, and with cohomogeneity one on $(M,g)$. The action preserves the level sets of $f$.
\end{theorem}
The proof uses a generalization of Brendle's Killing field technique pioneered in \cites{brendle2013rotational,brendle2014rotational}. This technique forms the backbone of various uniqueness theorems in Ricci flow under positive curvature assumptions \cites{chodosh,chodosh-fong,brendle-acta,brendle-naff,zz}. In these works, positive curvature is shown to imply rigidity, i.e. $\ker_{L^2(e^f)}(L) = \{0\}$, which then implies `maximal' symmetry. Theorem \ref{thm:key-symmetry-2} extends this, showing that without curvature assumptions, a dimension upper bound on $\ker_{L^2(e^f)}(L)$ implies a lower bound on the number of global symmetries.

Next, we refine Theorem \ref{thm:key-symmetry-2} for certain choices of asymptotic links $(\mathcal{S},\bar{g}_\infty)$, in particular determining all possibilities for $G$ in these cases. This reduces $(M,g)$ to a cohomogeneity one manifold with metric governed by an ODE system, which ultimately leads to classification and symmetry results for GRSs described below.

\subsection{Uniqueness of rotationally symmetric GRSs} \label{subsec:uniqueness-bryant}

If $(\mathcal{S},\bar{g}_\infty)$ is a round sphere, a refinement of Theorem \ref{thm:key-symmetry-2} leads to the following result.
\begin{theorem} \label{thm:brendle-higherdim-gen}
    Let $(M^n,g,f)$ be an asymptotically $\Sph^{n-1}$-cylindrical steady GRS of dimension $n \geq 3$. Suppose that
    \begin{align} \label{eq:bryant-ker-bound}
        \dim\ker_{L^2(e^f)}(L) \leq \begin{cases}
            n-2 & \text{if } n=3 \text{ or } n \geq 5, \\
            1 & \text{if } n=4.
        \end{cases}
    \end{align}
    Then $(M,g,f)$ is diffeomorphic to $\R^n$ and rotationally symmetric, hence isometric to the Bryant steady soliton \cite{bryant}.

    Similarly, suppose $(M^n,g,f)$ is an asymptotically $(\Sph^{n-1},\alpha\bar{g}_\infty)$-conical expanding GRS with $n \geq 3$ and $\alpha > 0$, where $\bar{g}_\infty$ is the round metric on $\Sph^{n-1}$ with sectional curvature 1. If \eqref{eq:bryant-ker-bound} holds, then $(M,g,f)$ is diffeomorphic to $\R^n$ and rotationally symmetric, hence isometric to the Bryant expanding GRS with cone angle $\alpha$ \cite{bryant}.
\end{theorem}
In the steady case, Brendle \cites{brendle2013rotational,brendle2014rotational} established the same uniqueness under a positive sectional curvature assumption instead of \eqref{eq:bryant-ker-bound}.\footnote{In dimension 3, these hypotheses are met for any non-flat, $\kappa$-noncollapsed steady GRS. Brendle's result in this setting resolved a major conjecture left open in Perelman's first paper \cite{perelman1}.} His proof establishes that $\ker_{L^2}(L) = \{0\}$, so Theorem \ref{thm:brendle-higherdim-gen} generalizes his result. We note that Brendle's result also holds under weaker curvature conditions \cite{zz} and on smooth orbifolds with positive sectional curvature \cite{deng-orbifold}.

In the expanding case, Chodosh \cite{chodosh} proved the same uniqueness under a positive sectional curvature assumption instead of \eqref{eq:bryant-ker-bound}, similarly to Brendle's theorem. Positive sectional curvature corresponds to $\alpha < 1$. However, Theorem \ref{thm:brendle-higherdim-gen} also applies to some nonpositively curved Bryant expanders ($\alpha \geq 1$), as Deruelle \cite{deruelle-stability} showed that there exists $\epsilon > 0$ such that strict stability (and hence $\ker_{L^2(e^f)}(L) = \{0\}$) holds when $\alpha \in (0,1+\epsilon)$.

The special bound for $n=4$ in \eqref{eq:bryant-ker-bound} arises from our proof strategy. In the proof, we establish that $G$ in Theorem \ref{thm:key-symmetry-2} is a closed subgroup of $SO(n)$ with dimension at least $\frac{n(n-1)}{2} - \dim\ker_{L^2(e^f)}(L)$, but this does not imply $G = SO(n)$ (and hence rotational symmetry) unless the bound \eqref{eq:bryant-ker-bound} is met.

It remains unknown whether \eqref{eq:bryant-ker-bound} can be dropped; this is true in mean curvature flow for the steady case, where the \emph{bowl soliton} plays the role of the Bryant steady soliton \cites{chhw,hershkovits}. Theorem \ref{thm:brendle-higherdim-gen} says that if a counterexample exists, then it must admit a multi-parameter family of exponentially close steady GRSs at a linear level.

\subsection{Global symmetries of GRSs with Berger-type asymptotic links} \label{subsec:uniq-appl}

For $m,k \geq 1$, the principal $U(1)$ Hopf fibration $\pi: \Sph^{2m+1} \to \CP^m$ induces a free action of the cyclic subgroup $\Z_k \subset U(1)$ on $\Sph^{2m+1}$. Equip $\Sph^{2m+1}$ with a Berger metric $\tilde{g}_{a,b}$ for some $a,b > 0$, that is
\begin{align} \label{eq:berger-gtilde}
    \tilde{g}_{a,b} = a^2 \sigma \otimes \sigma + b^2 \pi^*g_{\mathrm{FS}}
\end{align}
where $\sigma$ is a 1-form dual to the Hopf fibers, and $g_{\mathrm{FS}}$ is the Fubini--Study metric on $\CP^m$ with holomorphic sectional curvature 4. Then the action of $\Z_k$ is isometric on $(\Sph^{2m+1}, \tilde{g}_{a,b})$, and the quotient is a lens space denoted by $(L_{m,k}, g_{a,b})$. When $a=b=\kappa$, the metrics $\tilde{g}_{a,b}$ and $g_{a,b}$ have constant sectional curvature $\kappa^{-2}$.

Consider metrics on $(0,\infty) \times L_{m,k}$ of the form
\begin{align}
    g = ds^2 + g_{a(s),b(s)},
\end{align}
where $a$ and $b$ are smooth positive functions on $(0,\infty)$.
A computation (e.g. \cite{appleton}*{Appendix A}) shows that $((0,\infty) \times L_{m,k},g,f)$ defines a GRS \eqref{eq:RS} with $f = f(s)$ whenever $(f,a,b): (0,\infty) \to \R \times (0,\infty)^2$ solves the system
\begin{align}
    f'' &= -\frac{a''}{a} - 2m\frac{b''}{b} + \lambda, \label{eq:appleton-system-f} \\
    a'' &= 2m\left( \frac{a^3}{b^4} - \frac{a'b'}{b} \right) - a'f' + \lambda a, \label{eq:appleton-system-a} \noeqref{eq:appleton-system-a} \\
    b'' &= \frac{2m+2}{b} - 2\frac{a^2}{b^3} - \frac{a'b'}{a} - (2m-1)\frac{(b')^2}{b} - b'f' + \lambda b. \label{eq:appleton-system-b}
\end{align}
Moreover, if the following boundary conditions are satisfied:
\begin{equation}
\begin{gathered}
    f(0) = 0, \quad f'(0) = 0, \quad
    a(0) = 0, \quad a'(0) = k, \quad b(0) > 0, \quad b'(0) = 0,\label{eq:appleton-boundary}
\end{gathered}
\end{equation}
then $((0,\infty) \times L_{m,k}, g)$ is smoothly completed at $s=0$ by inserting a central $\CP^m$. The resulting GRS $(M,g,f)$ lives on a complete $(2m+2)$-dimensional Riemannian manifold diffeomorphic to the total space of the complex line bundle $\O_{\CP^m}(-k)$.

For each $m, k \geq 1$ and $\alpha,\beta > 0$, define the following classes of GRSs:
\begin{itemize}
    \item $\mathcal{A}_{m,k}^s$ is the set of steady GRSs $(M,g,f)$ defined by solutions $(f,a,b): (0,\infty) \to \R \times (0,\infty)^2$ of \eqref{eq:appleton-system-f}--\eqref{eq:appleton-boundary} with $\lambda=0$ such that $\lim_{s\to\infty} \frac{a(s)}{b(s)} = 1$.
    \item $\mathcal{A}_{m,k}^e(\alpha,\beta)$ is the set of expanding GRSs $(M,g,f)$ defined by solutions $(f,a,b): (0,\infty) \to \R \times (0,\infty)^2$ of \eqref{eq:appleton-system-f}--\eqref{eq:appleton-boundary} with $\lambda=-\frac{1}{2}$ such that $\lim_{s \to \infty} \frac{a(s)}{s} = \alpha$ and $\lim_{s \to \infty} \frac{b(s)}{s} = \beta$.
\end{itemize}
Examples of GRSs in these classes include Appleton's steady GRSs in $\mathcal{A}_{m,k}^s$ for each $m \geq 1$ and $k \geq m+2$ \cite{appleton}, which are asymptotically $L_{m,k}$-cylindrical. Appleton conjectured that there is exactly one soliton in each $\mathcal{A}_{m,k}^s$ up to scaling. For expanders, Feldman, Ilmanen, and Knopf \cite{fik} proved the existence of a K\"ahler expanding GRS in $\mathcal{A}_{m,k}^e(p,\sqrt{p})$ for each $m \geq 1$, $k \geq m+2$, and $p > 0$, unique up to scaling. These complement Cao's K\"ahler expanding GRSs on $\C^n$ \cite{cao}, for which a Brendle-type uniqueness theorem also holds \cite{chodosh-fong}. More recently, Donovan \cite{donovan} constructed families of 4-dimensional expanders in $\bigcup_{\alpha,\beta > 0} \mathcal{A}_{1,k}^e(\alpha,\beta)$ for each $k \geq 1$ which are generically non-K\"ahler and asymptotically conical, and contain the examples of \cites{cao,fik} as special cases.

In light of these constructions, we will establish the following symmetry reduction theorem, which is an analog of Theorem \ref{thm:brendle-higherdim-gen} for Berger-type asymptotic links. The special bounds for $m=1$ and $m=3$ are needed for similar reasons as before.
\begin{theorem} \label{thm:appleton-uniqueness}
    Let $(M^{2m+2},g,f)$ be an asymptotically $L_{m,k}$-cylindrical steady GRS, where $m \geq 1$ and $k \geq 3$. Suppose that
    \begin{align} \label{eq:appleton-dim-condition}
        \dim\ker_{L^2(e^f)}(L) \leq \begin{cases}
            2m-1 & \text{if } m=2 \text{ or } m \geq 4, \\
            2 & \text{if } m=3, \\
            0 & \text{if } m=1.
        \end{cases}
    \end{align}
    Then $(M,g,f) \in \mathcal{A}_{m,k}^s$.
    
    Similarly, suppose $(M^{2m+2},g,f)$ is an asymptotically $(L_{m,k}, g_{\alpha,\beta})$-conical expanding GRS, where $m \geq 1$, $k \geq 3$, and $\alpha,\beta > 0$. If \eqref{eq:appleton-dim-condition} holds, then $(M,g,f) \in \mathcal{A}_{m,k}^e(\alpha,\beta)$.
\end{theorem}

A caveat of Theorem \ref{thm:appleton-uniqueness} is that we do not know whether the existing examples satisfy \eqref{eq:appleton-dim-condition}, though a relevant analysis has been carried out for the much-related \emph{FIK shrinker} in dimension 4 \cite{naff-ozuch}. We also refer to our other work \cite{law} for an analog of Theorem \ref{thm:appleton-uniqueness} in the setting of Ricci-flat K\"ahler ALE spaces, where it implies a genuine uniqueness result for certain examples.

\subsection{Structure of the paper} \label{subsec:outline}

In \S\ref{sec:geom-acyl}, we collect known facts about asymptotically cylindrical steady GRSs and asymptotically conical expanding GRSs.
In \S\ref{sec:global-symmetry}, we prove Theorem \ref{thm:key-symmetry-2} by generalizing Brendle's Killing field technique. In \S\ref{sec:rot-symm}, we apply the results from the previous sections to prove Theorem \ref{thm:brendle-higherdim-gen}. In \S\ref{sec:berger-symm}, we proceed along similar lines to prove Theorem \ref{thm:appleton-uniqueness}.

\subsection*{Note on this version}

Earlier arXiv versions of this paper contained a fatal error in Lemma 3.11 (old numbering). The author is grateful to Bennett Chow and Yuxing Deng for pointing this out. Consequently, the main claims of those versions do not hold as stated. The present version replaces those arguments and should be regarded as superseding the previous versions.

\subsection*{Acknowledgements}

We thank William Minicozzi and Tristan Ozuch for their insights and continual support, as well as Shrey Aryan, Pak-Yeung Chan, Yonghwan Kim, and Yongjia Zhang for inspiring discussions. This work was partially supported by a Croucher Scholarship and by a Simons Dissertation Fellowship in Mathematics.

\section{Geometry of gradient Ricci solitons} \label{sec:geom-acyl}

This section collects known geometric properties of the spaces we study.

\subsection{Asymptotically cylindrical steady GRSs} \label{subsec:grs-conventions}

In this subsection, we assume $(M^n,g,f)$ is an asymptotically $\mathcal{S}$-cylindrical steady GRS of dimension $n \geq 3$ (see Definition \ref{def:acyl-quotient}).
Recall that the steady GRS equation is
\begin{align} \label{eq:GRS}
    \Ric = \nabla^2 f.
\end{align}
Denote by $R$ the scalar curvature of $(M,g)$. Standard identities for steady GRSs give $R \geq 0$ and $R + |\nabla f|^2 \equiv c \geq 0$, and if equality holds in either then $\Ric = 0$. Clearly, asymptotically $\mathcal{S}$-cylindrical steady GRSs are not Ricci flat, so
\begin{align} \label{eq:R0}
    R > 0
\end{align}
and by scaling the metric we may arrange that $c=1$; thus
\begin{align} \label{eq:identity-1}
    R + |\nabla f|^2 = 1.
\end{align}
Our convention for the Laplacian on any tensor is $\Delta = -\nabla^*\nabla$. The drift Laplacian $\Delta_f$ on tensors is defined by
\begin{align}
    \Delta_f = \Delta + \nabla_{\nabla f}.
\end{align}
This is self-adjoint with respect to the weighted volume form $e^f \dvol_g$. Under the normalization \eqref{eq:identity-1}, one has the soliton identity
\begin{align} \label{eq:dff}
    \Delta_f f = 1.
\end{align}

Let $r: M \to \R$ be the distance from a fixed point with respect to $g$.

\begin{lemma}[\cite{CDN}*{\S 2}, also \cite{bamler-3}*{Lemma 3.14}] \label{lem:f-r-comparable}
    We have $\frac{f(x)}{r(x)} \to 1$ as $r(x) \to \infty$.
\end{lemma}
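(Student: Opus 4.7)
The plan is to combine the steady soliton identity $R + |\nabla f|^2 = 1$ from \eqref{eq:identity-1} with the decay $R \to 0$ at spatial infinity, and then integrate along (reverse) gradient flow lines of $f$. The decay $R \to 0$ follows directly from the definition of asymptotic $\mathcal{S}$-cylindricality: for any sequence $p_m$ with $r(p_m) \to \infty$, the rescaling factors $r_m$ in Definition \ref{def:acyl-quotient} must diverge (so that the rescaled metric balls around $p_m$ exhaust the infinite-extent cylinder $\mathcal{S} \times \R$ in the pointed Cheeger--Gromov sense), and the identity $r_m R(p_m) = \tfrac{n-1}{2} + o(1)$ then forces $R(p_m) \to 0$. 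Combined with \eqref{eq:identity-1}, this yields $|\nabla f| \leq 1$ everywhere and $|\nabla f| \to 1$ at infinity.

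For the upper bound $\limsup f/r \leq 1$, fix a basepoint $o$ normalized so $f(o) = 0$, and integrate $|\nabla f| \leq 1$ along a unit-speed minimizing geodesic $\gamma$ from $o$ to $x$:
\begin{align*}
    f(x) = \int_0^{r(x)} \inner{\nabla f}{\gamma'(s)}\,ds \leq \int_0^{r(x)} |\nabla f|\,ds \leq r(x).
\end{align*}

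For the lower bound, fix $\epsilon > 0$ and set $K_\epsilon := \{R \geq \epsilon\} = \{|\nabla f|^2 \leq 1 - \epsilon\}$, which is compact by the decay $R \to 0$. Using properness of $f$ (a standard property for asymptotically cylindrical steady GRSs, proven in \cite{CDN}*{\S 2} and \cite{bamler-3}), choose $L > 0$ large so that $K_\epsilon \subset \{f \leq L\}$; the sublevel set $\{f \leq L\}$ is then compact with $r$-diameter at most some $D < \infty$. For each $x$ with $f(x) > L$, let $\gamma$ be the unit-speed reverse gradient flow of $f$ starting at $x$, defined by $\gamma'(t) = -\nabla f(\gamma(t))/|\nabla f|(\gamma(t))$. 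Along $\gamma$, $f$ strictly decreases at rate $|\nabla f|$, and so long as $\gamma(t) \in \{f > L\} \subset M \setminus K_\epsilon$ we have $|\nabla f|(\gamma(t)) \geq \sqrt{1-\epsilon}$. Hence $\gamma$ reaches $\{f = L\}$ at a finite time $T$, and
\begin{align*}
    f(x) - L = \int_0^T |\nabla f|(\gamma(t))\,dt \geq \sqrt{1-\epsilon}\,T \geq \sqrt{1-\epsilon}\,(r(x) - D),
\end{align*}
since $\gamma$ has unit speed and terminates in $\{f \leq L\}$. Dividing by $r(x)$, sending $r(x) \to \infty$, and then $\epsilon \to 0$, gives $\liminf f/r \geq 1$, completing the proof.

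The main obstacle is verifying the properness of $f$ and the compactness of $\{R \geq \epsilon\}$, both of which depend on quantitative control of how the soliton opens up at infinity and on $R \to 0$ outside compact sets. These are exactly the features of asymptotically $\mathcal{S}$-cylindrical steady GRSs established in the cited references, and the plan is to invoke them directly rather than reprove them.
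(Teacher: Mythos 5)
The paper does not actually prove this lemma --- it is quoted from \cite{CDN}*{\S 2} and \cite{bamler-3}*{Lemma 3.14} --- so the comparison is really with those external arguments. Your upper bound is fine: it uses only $R \geq 0$ and the normalization $R + |\nabla f|^2 = 1$. The problem is the lower bound, where there is a genuine circularity. You assume that $\{f \leq L\}$ is compact with finite $r$-diameter, i.e.\ that $f$ is proper. But in this paper properness of $f$ is recorded as a \emph{consequence} of Lemma \ref{lem:f-r-comparable} (``In particular, $f$ is proper and bounded from below''), and it is essentially equivalent to the nontrivial half of the lemma: the trivial direction is $f \lesssim r$, and the content is precisely a linear lower bound $f \gtrsim r - C$, which is what properness-with-linear-growth encodes. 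Invoking the cited references for properness and then ``deriving'' $f/r \to 1$ from it is, in effect, citing the references for the statement being proved. The concrete step that fails without this input is the assertion that the reverse gradient flow line from $x$ reaches $\{f = L\}$ in finite time $T$: a priori $f$ could be unbounded below outside the compact set $K_\epsilon = \{R \geq \epsilon\}$, in which case the flow line decreases $f$ forever without ever entering $K_\epsilon$, and the inequality $T \geq r(x) - D$ never gets off the ground. (Note that the weaker statement ``$f$ is bounded below'' would already suffice to close your argument, since the flow line would then be forced into $K_\epsilon$ within time $(f(x) + C)/\sqrt{1-\epsilon}$; but boundedness below is also not established independently here, and for general steady GRSs it is false --- e.g.\ flat $\R^n$ with linear $f$ --- so it must come from the cylindrical asymptotics.)

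A secondary, smaller point: the claim that $R \to 0$ at infinity ``follows directly from the definition'' is quicker than it should be. One must rule out that $R(p_m)$ stays bounded away from $0$ along some sequence (equivalently, that the scales $r_m$ stay bounded), and this is exactly the content of \cite{deng-zhu}*{Lemma 6.5}, which the paper invokes for this purpose in the proof of Lemma \ref{lem:f-regular}. Provided you cite that, the step is fine. To repair the main gap you would need an independent argument that $f$ attains a minimum or is bounded below --- for instance via the identity $\Delta_f f = 1$ together with a maximum-principle or integral-curve analysis, or via the convergence of the rescaled vector fields $\sqrt{r_m}\,\nabla f$ to the axial field $\frac{\del}{\del z}$ on the limit cylinder, which is closer to how \cite{bamler-3} actually controls the direction of $\nabla f$ at infinity. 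As written, the proposal establishes $\limsup f/r \leq 1$ but not $\liminf f/r \geq 1$.
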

In particular, $f$ is proper and bounded from below. As \eqref{eq:GRS} and \eqref{eq:identity-1} are invariant under translating $f$, we will henceforth assume $\min_M f = 1$.

\begin{lemma} \label{lem:f-regular}
    For sufficiently large $\rho > 0$, we have $\nabla f \neq 0$ on $\{f \geq \rho\}$. Hence, $\{f=\rho\}$ is a smooth hypersurface in $M$.
\end{lemma}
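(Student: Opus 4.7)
The plan is to combine the scalar curvature decay at infinity (already packaged into asymptotic cylindricity) with the pointwise identity $R + |\nabla f|^2 = 1$ to force $|\nabla f|^2 > 0$ outside a compact set. Specifically, by Definition \ref{def:acyl-quotient} the rescaled metrics $r_m^{-1} g$ converge to a nontrivial cylinder at every sequence $p_m \to \infty$, and the normalization $r_m R(p_m) = \frac{n-1}{2} + o(1)$ then forces $r_m \to \infty$ (otherwise the Cheeger--Gromov limit could not be a cylinder), so $R(p_m) \to 0$. Equivalently, one can cite the estimate $R \leq \Lambda_2/r$ mentioned in the remark after Definition \ref{def:acyl-quotient}. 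In either case, given any $\epsilon>0$ there is some $\rho_0>0$ with $R < \epsilon$ on $\{r \geq \rho_0\}$.

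Next I would translate this into a statement about the sublevel sets of $f$. Since $(M,g)$ is complete (as the soliton vector field $\nabla f$ is complete and the GRS is complete), Hopf--Rinow implies that $\{r \leq \rho_0\}$ is compact. By Lemma \ref{lem:f-r-comparable}, $f$ is proper, and in particular $M_0 := \max\{f(x) : r(x) \leq \rho_0\}$ is finite. For any $\rho > M_0$ the level set $\{f \geq \rho\}$ is disjoint from $\{r \leq \rho_0\}$, hence contained in $\{r > \rho_0\}$.

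Combining these two ingredients, for $\rho > M_0$ and any $x \in \{f \geq \rho\}$ we have $R(x) < \epsilon$, so by the steady soliton identity \eqref{eq:identity-1},
\begin{equation}
    |\nabla f(x)|^2 = 1 - R(x) > 1 - \epsilon > 0,
\end{equation}
provided $\epsilon$ was chosen $<1$ at the outset. Thus $\nabla f$ is nowhere vanishing on $\{f \geq \rho\}$, which in particular makes $\rho$ a regular value of $f$. By the regular value theorem, $\{f = \rho\}$ is a smooth embedded hypersurface in $M$.

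I do not anticipate a real obstacle here: the only mildly delicate point is confirming that asymptotic cylindricity genuinely yields $R(p_m) \to 0$ as $p_m \to \infty$, but this is an immediate consequence of the Cheeger--Gromov convergence in Definition \ref{def:acyl-quotient}, or alternatively of the decay $R \leq \Lambda_2/r$ recalled in the remark. Everything else is a one-line use of \eqref{eq:identity-1}, properness of $f$, and the implicit function theorem.
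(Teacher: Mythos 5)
Your proof is correct and takes essentially the same approach as the paper: the paper cites Deng--Zhu for the decay $R = o(1)$ as $r \to \infty$ and then combines this with the identity $R + |\nabla f|^2 = 1$ and Lemma \ref{lem:f-r-comparable}, exactly as you do. The additional details you supply (properness of $f$ to pass from sublevel sets of $r$ to superlevel sets of $f$, and choosing $\epsilon < 1$) are precisely the steps the paper leaves implicit.
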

\begin{proof}
    By \cite{deng-zhu}*{Lemma 6.5}, asymptotic $\mathcal{S}$-cylindricity implies that $R = o(1)$ as $r \to \infty$.
    Then the claim follows from \eqref{eq:identity-1} and Lemma \ref{lem:f-r-comparable}.
\end{proof}

Next, we state the roundness estimates on the level sets of $f$ which were first established by Brendle \cite{brendle2014rotational} and subsequently improved by Zhao--Zhu \cite{zz}.

\begin{proposition}[\cite{zz}*{Propositions 2.5 and 2.7}] \label{prop:roundness-estimates}
    Let $\{e_1,\ldots,e_n\}$ be an orthonormal frame such that $e_n = \frac{\nabla f}{|\nabla f|}$. There exists $\delta > 0$ such that on $M$, we have
    \begin{align} \label{eq:fR}
        f R &= \frac{n-1}{2} + \O(r^{-\frac{1}{2}-\delta})
    \end{align}
    and
    \begin{align} \label{eq:fRm}
        f \Rm_{ijkl} &= \frac{1}{2(n-2)} (g_{ik} - \del_i f \del_k f)(g_{jl} - \del_j f \del_l f) \\
        &\quad - \frac{1}{2(n-2)} (g_{il} - \del_i f \del_l f)(g_{jk} - \del_j f \del_k f) + \O(r^{-\frac{1}{2}-\delta}).
    \end{align}
\end{proposition}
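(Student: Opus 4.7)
The approach I would take follows the now-standard strategy of Brendle \cite{brendle2014rotational} and Zhao--Zhu \cite{zz}. Define the scalar deviation $u := fR - \frac{n-1}{2}$ and a tensor $T$ quantifying the discrepancy between $f\Rm$ and the model curvature appearing on the right-hand side of \eqref{eq:fRm}. The starting qualitative input is asymptotic cylindricity itself: evaluating the rescaled flow at $t = 1/2$ and using the normalization $r_m R(p_m) = \frac{n-1}{2} + o(1)$ shows that $u \to 0$ and $T \to 0$ pointwise as $r \to \infty$ on frames adapted to $\nabla f$. Lemma \ref{lem:f-r-comparable} identifies $f$ with the soliton distance, which converts the cylinder's time parameter into the intrinsic function $f$ and explains the $f$-weighted form of the estimates.

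Second, I would derive coupled elliptic equations for $u$ and $T$ using the steady soliton identities $\Delta_f f = 1$, $R + |\nabla f|^2 = 1$, the Perelman-type identity $\Delta_f R = -2|\Ric|^2$, and the analogous equation for the Riemann tensor $\Delta_f \Rm = \Rm \ast \Rm$ on a steady GRS. On the model cylinder one has $|\Ric|^2 = R^2/(n-1)$ exactly, so the inhomogeneities in the equations for $u$ and $T$ are quadratic in the deviations plus decaying remainders arising from $|\nabla f|^2 = 1 - R$. To extract polynomial decay, I would use a barrier of the form $C r^{-\alpha}$ with $\alpha = \frac{1}{2} + \delta$: since $f \sim r$ and $|\nabla f| \to 1$, a direct computation gives $\Delta_f r^{-\alpha} \approx -\alpha r^{-\alpha - 1}$ at infinity, producing a strict sign that dominates the quadratic error terms. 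A maximum principle argument on an exhaustion by sublevel sets of $f$ then pins down the rate.

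A bootstrap is necessary because the two estimates are genuinely coupled: the scalar equation needs traceless Ricci control (which comes from $T$), while the tensor equation's remainders need $R$-decay. The plan is to first establish a crude rate $r^{-\epsilon}$ via the maximum principle with a generous barrier, then iterate, tightening the quadratic inhomogeneities at each pass. The weighted $L^2$ decay estimates of Dung--Sung \cite{dung-sung}, which give integral control under the drift measure $e^f \dvol_g$, are the appropriate tool for sharpening polynomial convergence and removing the loss of exponent that standard Moser iteration produces.

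The main obstacle is the final passage to the sharp exponent $-\frac{1}{2} - \delta$. The threshold $-\frac{1}{2}$ is dictated by the spectrum of the drift Laplacian on a cylindrical cross-section (equivalently, by ODE analysis along the axial coordinate of the tangent cylinder, where the linearized operator has solutions behaving like $r^{-1/2}$ times a logarithm), so one cannot cross it by PDE means alone. Obtaining any $\delta > 0$ requires carefully tracking how the barrier competes against the quadratic right-hand sides at the borderline exponent, and this is exactly where Zhao--Zhu improved upon Brendle's earlier (slower) rate. A secondary technical point is to verify that the asymptotic convergence gives control on mixed components of $\Rm$ involving the radial direction $e_n = \nabla f/|\nabla f|$, not just on the cylindrical cross-section, which amounts to checking that the soliton's axial direction coincides with that of the limiting cylinder up to the same order.
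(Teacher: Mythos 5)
Your sketch is heuristically aligned with Brendle and Zhao--Zhu (the deviation quantities, the identities $\Delta_f R = -2|\Ric|^2$ and $R+|\nabla f|^2=1$, the coupling between the scalar and full-curvature estimates, and the spectral origin of the exponent), but it replaces the mechanism that actually produces the rate with one that cannot. The paper's own ``proof'' is a citation: it points to Propositions 2.1--2.5 of \cite{brendle2014rotational} followed by \S 2.1 of \cite{zz}, and its only substantive content is the observation that, once $f\sim r$ is known (Lemma \ref{lem:f-r-comparable}), those arguments use nothing beyond asymptotic cylindricity --- no curvature hypotheses --- and hence carry over verbatim to quotient cross-sections $\mathcal{S}$. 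Those cited arguments are \emph{parabolic blowdown} arguments: one runs the Ricci flow generated by $\nabla f$, uses the Cheeger--Gromov convergence of the rescaled flows to the shrinking cylinder together with Shi-type interior estimates, and iterates an interior decay estimate for the linearized parabolic equation on the cylinder over dyadic scales; the exponent $\tfrac12+\delta$ is read off from the spectral decomposition on the cross-section, exactly as in the mechanism of Proposition \ref{prop:brendle-blowdown}. Your proposal does not engage with the point that is actually at stake in this paper, namely that these arguments survive the removal of all curvature hypotheses.

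The concrete gap in your elliptic substitute is the outer boundary of the maximum principle. On $\{\rho_0\le f\le \rho_1\}$, comparing $u$ with the supersolution $Cr^{-\alpha}$ requires knowing $u\le C\rho_1^{-\alpha}$ on $\{f=\rho_1\}$, which is precisely the estimate being proven; the purely qualitative input $u=o(1)$ cannot be upgraded to a first rate $O(r^{-\epsilon})$ by a ``generous barrier,'' so the base case of your bootstrap does not close. (This is exactly why the blowdown-to-cylinder comparison is indispensable: it is the device that converts information at scale $2\rho$ into strictly better information at scale $\rho$.) You then explicitly defer the decisive step --- crossing to the exponent $-\tfrac12-\delta$ --- by saying it ``requires carefully tracking how the barrier competes against the quadratic right-hand sides''; but that passage is the entire content of Zhao--Zhu's improvement over Brendle, and it is carried out via the cylinder spectral analysis, not via barriers. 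Finally, the Dung--Sung weighted $L^2$ estimate is not the relevant tool here: in this paper it enters only in \S\ref{sec:kf}, to obtain superexponential decay of $\L_W g-\lambda\Ric$, and it produces exponential decay in annuli for drift-subharmonic functions rather than any sharpening of polynomial rates.
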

For the full proof of this result, see Propositions 2.1--2.5 in \cite{brendle2014rotational} followed by Section 2.1 of \cite{zz}. Once $f$ and $r$ are known to be comparable (as in Lemma \ref{lem:f-r-comparable}), these proofs only use asymptotic cylindricity and do not require any curvature hypotheses. They carry over verbatim to asymptotically $\mathcal{S}$-cylindrical steady GRSs.

\begin{corollary} \label{cor:shi-steady}
    We have $|\nabla^k {\Rm}| \leq \O(r^{-1-\frac{k}{2}})$ for each $k \geq 0$. Also, $|\nabla f| \leq \O(1)$ and $|\nabla^k f| \leq \O(r^{-\frac{k}{2}})$ for each $k \geq 2$.
\end{corollary}
\begin{proof}
    The curvature bounds follow from Lemma \ref{lem:f-r-comparable}, Proposition \ref{prop:roundness-estimates}, and Shi's estimates. Combining this with \eqref{eq:identity-1} and the equation $\Ric = \nabla^2 f$, the bounds on $f$ follow.
\end{proof}

\begin{theorem} \label{thm:conn-at-infty}
    $M$ has exactly one end.
\end{theorem}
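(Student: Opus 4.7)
The strategy is to invoke a splitting-type theorem for smooth metric measure spaces due to Munteanu--Wang \cite{mw-smms}*{Corollary 1.1}. Since $(M^n,g,f)$ satisfies the steady GRS equation \eqref{eq:GRS}, its Bakry--\'Emery Ricci tensor
\begin{equation}
    \Ric_f := \Ric - \Hess_f
\end{equation}
vanishes identically, and in particular is nonnegative. The Munteanu--Wang result asserts that a complete smooth metric measure space $(M,g,e^{-f}\dvol_g)$ with $\Ric_f \geq 0$ either has exactly one end, or splits isometrically as a product $(N^{n-1},g_N) \times \R$ in such a way that $f$ depends linearly on the $\R$-coordinate alone. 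Since $f$ is proper by Lemma \ref{lem:f-r-comparable}, $M$ is noncompact and so possesses at least one end; thus it suffices to rule out the splitting alternative.

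To exclude the splitting, suppose for contradiction that $(M,g,f) = (N,g_N) \times (\R, dt^2)$ with $f(x,t) = \alpha t + \beta$ for some constants $\alpha \neq 0$ and $\beta$. Then $\Hess_f \equiv 0$ on $M$, and the steady GRS equation \eqref{eq:GRS} forces $\Ric \equiv 0$, hence $R \equiv 0$ on $M$. This directly contradicts the strict positivity \eqref{eq:R0} of the scalar curvature, which itself is a consequence of asymptotic $\mathcal{S}$-cylindricity combined with the standard steady GRS inequality $R \geq 0$. (Equivalently, $R \equiv 0$ is incompatible with Proposition \ref{prop:roundness-estimates}, which shows $fR \to \frac{n-1}{2}$ at infinity.) Hence no splitting can occur, and Munteanu--Wang forces $M$ to have exactly one end.

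The main potential obstacle lies entirely on the side of invoking \cite{mw-smms}, which for our purposes is a black box. The remaining content---verifying that our steady GRS cannot split off a line---is the short calculation above, so the author's remark ``it is clear that $(M,g)$ does not split off a line'' is indeed immediate once one records that $R>0$.
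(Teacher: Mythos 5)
Your proposal is correct and follows exactly the paper's route: the paper cites the same Munteanu--Wang dichotomy and dismisses the splitting alternative with the one-line remark that $(M,g)$ clearly does not split off a line. Your computation (splitting forces $\Hess_f\equiv 0$, hence $\Ric\equiv 0$ and $R\equiv 0$, contradicting \eqref{eq:R0}) is precisely the justification the paper leaves implicit.
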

\begin{proof}
    This follows from \cite{mw-smms}*{Corollary 1.1} since $(M,g)$ clearly does not split off a line.
\end{proof}

\begin{corollary} \label{cor:area-noncollapsed}
    The hypersurfaces $\{f=\rho\}$ are diffeomorphic to $\mathcal{S}$ for all large $\rho$.
\end{corollary}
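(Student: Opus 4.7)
The plan is to first reduce to showing a single high level set is diffeomorphic to $\mathcal{S}$, and then extract such a diffeomorphism from the cylindrical asymptotics. Since $f$ is proper (Lemma \ref{lem:f-r-comparable}) and $\nabla f \neq 0$ on $\{f \geq \rho_0\}$ for some $\rho_0$ (Lemma \ref{lem:f-regular}), each $\Sigma_\rho := \{f=\rho\}$ with $\rho \geq \rho_0$ is a smooth compact hypersurface, and flowing along $\nabla f/|\nabla f|^2$ gives diffeomorphisms $\Sigma_\rho \cong \Sigma_{\rho'}$. Moreover $\Sigma_\rho$ is connected: by Theorem \ref{thm:conn-at-infty} the set $\{f > \rho\}$ has one unbounded component, and any bounded component would contain an interior critical point of $f$, contradicting $\nabla f \neq 0$.

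To identify the diffeomorphism type, I would pick $p_m \in \Sigma_{\rho_m}$ with $\rho_m \to \infty$ and invoke asymptotic $\mathcal{S}$-cylindricity to obtain embeddings $\psi_m : U_m \to M$ realizing Cheeger--Gromov convergence $(M, \hat{g}^{(m)}(0), p_m) \to (\mathcal{S} \times \R, \bar{g}(0), p_\infty)$. Consider the rescaled potentials $\tilde{f}^{(m)} := r_m^{-1/2}(f - \rho_m)$, which vanish at $p_m$ and share their level-set foliation with $f$. The identities $|\nabla f|_g^2 = 1 - R$ and $\Hess_g f = \Ric_g$, combined with the roundness estimate $R = O(r_m^{-1})$ on the convergence neighborhood (Proposition \ref{prop:roundness-estimates}), yield $|\nabla \tilde{f}^{(m)}|_{\hat{g}^{(m)}(0)}^2 \to 1$ and $|\Hess_{\hat{g}^{(m)}(0)} \tilde{f}^{(m)}|_{\hat{g}^{(m)}(0)} \to 0$; higher derivatives are controlled by successive differentiation of $\Hess f = \Ric$ together with the smoothness of Cheeger--Gromov convergence. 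Passing to a subsequence, $\tilde{f}^{(m)} \circ \psi_m$ converges smoothly to a function $\bar{f}$ on $(\mathcal{S} \times \R, \bar{g}(0))$ with $|\nabla \bar{f}| \equiv 1$ and $\Hess \bar{f} \equiv 0$. Thus $\nabla \bar{f}$ is a parallel unit vector field, which must be $\pm \partial_z$ since the compact factor $\mathcal{S}$ has positive Ricci and admits no nonzero parallel vector fields. Therefore $\bar{f}(s,z) = \pm(z - z_0)$ and $\{\bar{f} = 0\} = \mathcal{S} \times \{z_0\}$.

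To finish, fix $R > 0$ so that $B_R^{\bar{g}(0)}(p_\infty) \supset \mathcal{S} \times \{z_0\}$; for $m$ large, $B_R^{\bar{g}(0)}(p_\infty) \subset U_m$, and the implicit function theorem applied to the $C^\infty$-convergent sequence $\tilde{f}^{(m)} \circ \psi_m \to \pm(z - z_0)$ produces a smooth compact hypersurface $\tilde{\Sigma}_m \subset U_m$ diffeomorphic to $\mathcal{S}$ on which $\tilde{f}^{(m)} \circ \psi_m \equiv 0$. Then $\psi_m(\tilde{\Sigma}_m) \subset \Sigma_{\rho_m}$ is compact and diffeomorphic to $\mathcal{S}$; being both open (as the image of an equidimensional immersion) and closed (compact in a Hausdorff space), it equals the connected set $\Sigma_{\rho_m}$, yielding $\Sigma_{\rho_m} \cong \mathcal{S}$. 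The main obstacle is pinning down $\bar{f}$ rigidly enough to identify the zero set with a cross-section of the cylinder; once this is secured via the parallel vector field argument, the open-and-closed step cleanly promotes the local diffeomorphism to a global one.
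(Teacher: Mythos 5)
Your proposal is correct and follows the same overall strategy as the paper: first get connectedness of the high level sets from one-endedness (Theorem \ref{thm:conn-at-infty}) together with the absence of critical points, then identify the diffeomorphism type from the Cheeger--Gromov convergence to the cylinder. Where the two arguments differ is in how that second step is executed. The paper first bounds the intrinsic diameter of $\{f=r_m\}$ by $C\sqrt{r_m}$ via Bonnet--Myers (using the linear Ricci decay of the level sets from \cite{deng-zhu}), so that the whole level set is swallowed by the convergence neighborhood, and then reads off the diffeomorphism from the convergence of $\sqrt{r_m}\,\nabla f$ to the axial field. You instead show that the rescaled potentials $r_m^{-1/2}(f-\rho_m)$ converge to a function on the cylinder with unit parallel gradient, hence to $\pm(z-z_0)$ by the de Rham splitting (the compact round factor carries no parallel fields), extract a compact copy of $\mathcal{S}$ inside $\{f=\rho_m\}$ by the implicit function theorem, and then use the open-and-closed argument on the connected level set to conclude that this copy is all of $\{f=\rho_m\}$; that last step is a clean substitute for the a priori diameter bound. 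Two minor points to tidy up: the convergence in Definition \ref{def:acyl-quotient} is stated only for $t\in(0,1)$, so to work with the slice $\hat g^{(m)}(0)=r_m^{-1}g$ you should note that the uniform curvature bounds from Proposition \ref{prop:roundness-estimates} allow the convergence to be extended to $t=0$ (or simply work at $t=\tfrac{1}{2}$); and your passage to a subsequence is harmless only because you first recorded that all level sets $\{f=\rho\}$ with $\rho\geq\rho_0$ are mutually diffeomorphic via the gradient flow --- which you did.
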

\begin{proof}
    By Lemma \ref{lem:f-r-comparable} and Lemma \ref{lem:f-regular}, the level sets $\{f=\rho\}$ are smooth hypersurfaces bounding the end(s) of $M$. Then by Theorem \ref{thm:conn-at-infty}, $\{f=\rho\}$ is connected for all large $\rho$.

    Take a sequence of points $p_m$ going to infinity, and let $r_m = f(p_m)$, $\hat{g}^{(m)}(t) = r_m^{-1} \Phi_{r_m t}^*g$. By \cite{deng-zhu}*{Lemma 6.5}, the Ricci curvature of the level sets of $f$ decays linearly in $r$. Then by the Bonnet--Myers theorem, the $g$-diameter of $\{f=r_m\}$ is bounded by $C\sqrt{r_m}$, so the $\hat{g}^{(m)}(t)$-diameter of $\{f=r_m\}$ is uniformly bounded over all $m$ and over all $t$ in a compact interval within $(0,1)$.
    
    By asymptotic $\mathcal{S}$-cylindricity, the sequence $(M, \hat{g}^{(m)}(t), p_m)$ converges in the Cheeger--Gromov sense to a shrinking $\mathcal{S}$-cylinder \eqref{eq:gbar-defn} for $t \in (0,1)$. Moreover, the vector fields $\sqrt{r_m} \nabla f$ converge to the axial vector field $\frac{\del}{\del z}$ on $\mathcal{S} \times \R$. Using this, and the assertions on the $\hat{g}^{(m)}(t)$-diameter of $\{f=r_m\}$ from the last paragraph, it is easy to see that $\{f=r_m\}$ is diffeomorphic to $\mathcal{S}$ for all large $m$.
\end{proof}

By the above results, $f$ is a coordinate on the end of $M$ with level sets diffeomorphic to $\mathcal{S}$. Let $F_\rho: \mathcal{S} \to \{f=\rho\} \subset M$ be a one-parameter family of diffeomorphisms such that $\frac{\del}{\del\rho} F_\rho = \frac{\nabla f}{|\nabla f|^2}$. Define Riemannian metrics $g_\rho$ on $\mathcal{S}$ by $g_\rho = F_\rho^*g$. Then the end of $(M,g)$ is isometric to
\begin{align} \label{eq:IS}
    \left( I \times \mathcal{S} , \frac{df^2}{|\nabla f|^2} + g_f \right),
\end{align}
where $I$ is an interval of the form $[\rho_0,\infty)$. Following \cite{zz}*{\S 2.2}, we have the following convergence and closeness bounds:
\begin{lemma} \label{lem:g-closeness}
    There exists $\delta > 0$ such that for each $k \geq 0$, we have
    \begin{align}
        \norm{\frac{1}{2(n-2)\rho} g_\rho - \bar{g}_{\infty}}_{C^k(\mathcal{S},\bar{g}_{\infty})} \leq \O(\rho^{-\frac{1}{2}-\frac{1}{2}\delta})
    \end{align}
    as $\rho \to \infty$, where $\bar{g}_{\infty}$ is a metric on $\mathcal{S}$ with constant sectional curvature 1.
\end{lemma}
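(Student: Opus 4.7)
The plan is to compute how the induced metrics $g_\rho = F_\rho^*g$ evolve in $\rho$ and apply the roundness estimates of Proposition \ref{prop:roundness-estimates}. Since $F_\rho$ integrates $\nabla f/|\nabla f|^2$ and $df$ vanishes on $T\{f=\rho\}$, the identity $\mathcal{L}_{\phi X}g = \phi \mathcal{L}_X g + d\phi \otimes X^\flat + X^\flat \otimes d\phi$ with $\phi = |\nabla f|^{-2}$ simplifies under $F_\rho^*$ and the GRS equation \eqref{eq:GRS} to
\begin{align}
\frac{\partial g_\rho}{\partial \rho} = F_\rho^*\left(\frac{2}{|\nabla f|^2}\,\Ric\big|_{T\{f=\rho\}}\right).
\end{align}
Contracting \eqref{eq:fRm} in an orthonormal frame with $e_n = \nabla f/|\nabla f|$ and using \eqref{eq:fR} gives the tangential Ricci estimate $\Ric_{ij} = \frac{1}{2\rho} g_{ij} + O(\rho^{-3/2-\delta})$ for $i,j < n$. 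Combined with $|\nabla f|^2 = 1 + O(\rho^{-1})$ from \eqref{eq:identity-1}, and converting $g$-norms on $\{f=\rho\}$ to $\bar{g}_\infty$-norms on $\mathcal{S}$ (where a $(0,2)$-tensor pulled back by $F_\rho$ picks up a factor of order $2(n-2)\rho$), I expect the leading $\frac{1}{\rho} g_\rho$ term to cancel cleanly against $\frac{1}{\rho} g_\rho$ in $\partial_\rho \bigl(g_\rho/(2(n-2)\rho)\bigr)$, leaving
\begin{align}
\frac{\partial}{\partial \rho}\left(\frac{g_\rho}{2(n-2)\rho}\right) = O(\rho^{-3/2-\delta}) + O(\rho^{-2})
\end{align}
in $\bar{g}_\infty$-norm. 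Integrating in $\rho$ shows $h_\rho := g_\rho/(2(n-2)\rho)$ is Cauchy and converges in $C^0$ to a limit metric $\bar{g}_\infty$ at rate $\rho^{-\min(1,\,1/2+\delta)}$, which dominates the stated $\rho^{-1/2-\delta/2}$ after possibly shrinking $\delta$.

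To identify $\bar{g}_\infty$ as a constant-sectional-curvature-1 metric, I would apply the Gauss equation on $\{f=\rho\}$. The second fundamental form with respect to $\nabla f/|\nabla f|$ is $A_{ij} = \Hess_f(e_i,e_j)/|\nabla f|$, so by the tangential Ricci estimate above $A_{ij} = O(\rho^{-1})$ on tangential vectors, hence $A \otimes A$ contributes only at subleading order $O(\rho^{-2})$. Combined with the ambient curvature bound \eqref{eq:fRm}, the intrinsic Riemann tensor of $\{f=\rho\}$ therefore has sectional curvature $\frac{1}{2(n-2)\rho} + O(\rho^{-3/2-\delta})$; multiplying by the scaling ratio $2(n-2)\rho$, the metric $h_\rho$ has sectional curvature $1 + O(\rho^{-1/2-\delta})$, which passes to $\bar{g}_\infty$ in the limit.

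The main obstacle is upgrading $C^0$ convergence to $C^\ell$. For this I would use the smooth Cheeger--Gromov convergence of the rescaled flows $\hat{g}^{(m)}(t) \to \bar{g}(t)$ from Definition \ref{def:acyl-quotient}, together with the Bonnet--Myers diameter bound $\mathrm{diam}(\{f=\rho\}, h_\rho) = O(1)$ (a consequence of the tangential Ricci lower bound just derived), to extract uniform $C^K$ bounds on $h_\rho$ for every $K \geq 1$. A standard Gagliardo--Nirenberg interpolation between $C^0$ and $C^K$, with $K$ chosen large relative to $\ell$, then converts the $C^0$ rate into the same $C^\ell$ rate up to a loss absorbable by replacing $\delta$ with $\delta/2$. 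The trickiest bookkeeping lies in the first step — carefully tracking the conversion between $g$-, $g_\rho$-, and $\bar{g}_\infty$-norms under $F_\rho$ and verifying the cancellation of all $O(\rho^{-1})$ terms in $\partial_\rho h_\rho$, so that the leading error truly starts at $O(\rho^{-3/2-\delta})$ rather than something larger.
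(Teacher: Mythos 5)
Your proposal is correct and follows essentially the same route as the paper's (cited) proof: the paper defers this lemma to Zhao--Zhu \S 2.2, whose argument is precisely to integrate $\partial_\rho g_\rho = F_\rho^*\bigl(2|\nabla f|^{-2}\Ric|_{T\{f=\rho\}}\bigr)$ against the tangential Ricci asymptotics extracted from Proposition \ref{prop:roundness-estimates}, identify the limit's curvature via the Gauss equation, and obtain the $C^\ell$ rates by interpolating the $C^0$ rate against uniform higher-derivative bounds (there coming from Shi-type estimates and Hamilton's interpolation inequalities rather than your Cheeger--Gromov/diameter route, but to the same effect). The only point deserving extra care, which you already flag, is that the uniform $C^K(\bar{g}_\infty)$ bounds must be established in the fixed $F_\rho$-gauge rather than merely modulo diffeomorphisms.
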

Thus, $(M,g)$ opens up like a paraboloid, and has volume growth of order $r^{\frac{n+1}{2}}$.

In what follows, given an asymptotically $\mathcal{S}$-cylindrical steady GRS $(M^n,g,f)$, we will identify level sets $\{f=\rho\}$ with $(\mathcal{S},g_\rho)$ and the end of $(M,g)$ with \eqref{eq:IS}. The metric $\bar{g}_\infty$ is understood to come from Lemma \ref{lem:g-closeness}.

\subsection{Asymptotically conical expanding GRSs}

In this subsection, we assume $(M^n,g,f)$ is an asymptotically $(\mathcal{S},\bar{g}_\infty)$-conical expanding GRS (see Definition \ref{def:acon}).
Recall that the expanding GRS equation is
\begin{align} \label{eq:EGS-equation}
    \Ric = \nabla^2 f - \frac{1}{2}g.
\end{align}
Standard identities for expanding GRSs give
\begin{align}
    R + \frac{n}{2} &\geq 0, \label{eq:EGS-1} \\
    R + |\nabla f|^2 &= f + \mu, \label{eq:EGS-2} \\
    \Delta_f f &= f + \frac{n}{2} + \mu, \label{eq:EGS-3}
\end{align}
where $\mu \in \R$ is a constant. By globally translating $f$ and appropriately adjusting the diffeomorphism $\phi$ in Definition \ref{def:acon}, we may assume $\mu = 0$.

We have the following analogs of the results from the last subsection. The first one, Lemma \ref{lem:f-asymptotics-EGS}, follows directly from Definition \ref{def:acon} and \eqref{eq:EGS-2}. Recall that $r: M \to \R$ denotes the distance from a fixed point with respect to $g$.

\begin{lemma} \label{lem:f-asymptotics-EGS}
    We have $f(x) \sim \frac{r(x)^2}{4}$ as $r(x) \to \infty$.
    For sufficiently large $\rho > 0$, we have $\nabla f \neq 0$ on $\{f \geq \rho\}$. Hence, $\{f=\rho\}$ is a smooth hypersurface diffeomorphic to $\mathcal{S}$.
\end{lemma}

\begin{lemma} \label{lem:EGS-curv-estimates}
    We have $|\nabla^k{\Rm}| \leq \O(r^{-2-k})$ and $|\nabla^k f| \leq \O(r^{2-k})$ for each $k \geq 0$.
\end{lemma}
\begin{proof}
    The bound on $|\nabla^k{\Rm}|$ follows from \eqref{eq:acon-diffeo-decay}. Using \eqref{eq:EGS-2} and the fact that $f \sim \frac{r^2}{4}$, we have $|\nabla f| = \O(r)$. Also $|\nabla^2 f| = |{\Ric} + \frac{1}{2}g| \leq \O(1)$. Finally, for each $k \geq 3$, we have $|\nabla^k f| = |\nabla^{k-2}{\Ric}| \leq \O(r^{-k}) \leq \O(r^{2-k})$.
\end{proof}

\begin{lemma} \label{lem:g-closeness-EGS}
    For each sufficiently large $\rho > 0$, identify $\{f=\rho\}$ with the Riemannian manifold $(\mathcal{S}, g_\rho)$. Then for each $k \geq 0$, we have
    \begin{align}
        \norm{\frac{1}{4\rho} g_\rho - \bar{g}_{\infty}}_{C^k(\mathcal{S},\bar{g}_\infty)} \leq \O(\rho^{-\frac{\epsilon}{2}}).
    \end{align}
\end{lemma}

\section{Global symmetry principles} \label{sec:global-symmetry}

The goal of this section is to prove Theorem \ref{thm:key-symmetry-2}.

\subsection{Killing fields on asymptotically cylindrical steady GRSs}

In this subsection, we fix an asymptotically $\mathcal{S}$-cylindrical steady GRS $(M^n,g,f)$. We first pull back Killing fields from $(\mathcal{S},\bar{g}_\infty)$ to obtain approximate Killing fields on $(M,g)$. Given a Killing field $\bar{U}$ on $(\mathcal{S},\bar{g}_\infty)$, we will say that a vector field $U$ on $M$ \emph{extends $\bar{U}$} if its restriction to any sufficiently far level set $\{f=\rho\} \cong \mathcal{S}$ satisfies
\begin{align}
    U|_{\{f=\rho\}} = \bar{U}.
\end{align}

\begin{proposition} \label{prop:approx-KF}
    Let $\bar{U}$ be a Killing field on $(\mathcal{S},\bar{g}_\infty)$, and let $U$ be a smooth vector field on $M$ which extends $\bar{U}$.
    Then there is a smooth vector field $W$ on $M$ such that
    \begin{align}
        \Delta_f W &= 0, \\
        |\nabla^k(W-U)| &\leq \O(r^{-\frac{k}{2}}) \text{ for each } k \geq 0, \\
        |\nabla^k W| &\leq \O(r^{\frac{1}{2}-\frac{k}{2}}) \text{ for each } k \geq 0, \\
        |\L_W g| &\leq \O(r^{-\frac{1}{2}}).
    \end{align}
\end{proposition}
\begin{proof}
    From \cite{zz}*{Proposition 3.1} and its proof, we have the following estimates for some $\delta > 0$:
    \begin{align}
        |\L_U g| &\leq \O(r^{-\frac{1}{2}-\frac{1}{2}\delta}), \label{eq:LU-bd} \\
        |\nabla^k \L_U g| &\leq \O(r^{-\frac{k}{2}}) \text{ for each } k \geq 1, \label{eq:nabla-LU-bd} \\
        |\nabla^k U| &\leq \O(r^{\frac{1}{2}-\frac{k}{2}}) \text{ for each } k \geq 0, \label{eq:nabla-U-bd} \\
        |\Delta_f U| &\leq \O(r^{-1-\frac{1}{4}\delta}). \label{eq:DeltaU-bd}
    \end{align}
    Combining \eqref{eq:nabla-LU-bd} with the relation
    \begin{align} \label{eq:div-identity}
        \div(\L_U g) - \frac{1}{2}\nabla(\tr \L_U g) = \Delta U + \Ric(U)
    \end{align}
    yields
    \begin{align} \label{eq:est1-DeltafU}
        |\nabla^k (\Delta U + \Ric(U))| \leq \O(r^{-\frac{1}{2}-\frac{k}{2}}) \quad \text{for each } k \geq 0.
    \end{align}
    Since $[U,\frac{\nabla f}{|\nabla f|^2}] = 0$ outside a compact set, we have
    \begin{align}
        \nabla_{\nabla f} U - \Ric(U) = -[U,\nabla f] = -U(|\nabla f|^2) \frac{\nabla f}{|\nabla f|^2} = \inner{U}{\nabla R} \frac{\nabla f}{|\nabla f|^2}
    \end{align}
    outside a compact set. By \eqref{eq:nabla-U-bd} and Corollary \ref{cor:shi-steady}, this implies
    \begin{align} \label{eq:est2-DeltafU}
        |\nabla^k(\nabla_{\nabla f} U - \Ric(U))| \leq \O(r^{-1-\frac{k}{2}}) \quad \text{for each } k \geq 0.
    \end{align}
    Using \eqref{eq:est1-DeltafU} and \eqref{eq:est2-DeltafU}, we get
    \begin{align}
        |\nabla^k \Delta_f U| \leq |\nabla^k(\Delta U + \Ric(U))| + |\nabla^k(\nabla_{\nabla f} U - \Ric(U))| \leq \O(r^{-\frac{1}{2}-\frac{k}{2}}) \quad \text{for each } k \geq 0.
    \end{align}
    Using this and \eqref{eq:DeltaU-bd} with standard interpolation inequalities gives
    \begin{align} \label{eq:nabla-DeltaU-bd}
        |\nabla^k \Delta_f U| \leq \O(r^{-1-\frac{k}{2}}) \quad \text{for each } k \geq 0.
    \end{align}
    Using the estimate \eqref{eq:DeltaU-bd}, it is proved in Lemma 3.2 and Theorem 3.3 in \cite{zz} that there exists a solution $V$ of
    \begin{align} \label{eq:V-PDE-steady}
        \Delta_f V = -\Delta_f U
    \end{align}
    such that $|V| \leq \O(1)$. Set $W := U + V$. Standard interior parabolic Schauder estimates applied to \eqref{eq:V-PDE-steady} yields $|\nabla^k V| \leq \O(r^{-\frac{k}{2}})$. Combining this with \eqref{eq:LU-bd} and \eqref{eq:nabla-U-bd} gives the claimed estimates.
\end{proof}

The main result of this subsection is Proposition \ref{prop:exact-KFs}, which upgrades some of the approximate Killing fields $W$ from Proposition \ref{prop:approx-KF} to exact Killing fields. The key is to study a PDE satisfied by the Lie derivative of $W$. We define the operator $L$ on symmetric 2-tensors $h$ by
\begin{align} \label{eq:L1}
    Lh = \Delta_f h + 2\Rm(h),
\end{align}
where $\Rm(h)_{ik} = \Rm_{ijkl} h^{jl}$. As $\Ric = \nabla^2 f$, one easily checks that
\begin{align} \label{eq:L2}
    Lh = \Delta_L h + \L_{\nabla f}h,
\end{align}
where $\Delta_L$ is the Lichnerowicz Laplacian defined by
\begin{align} \label{eq:lich-lap}
    \Delta_L h_{ik} = \Delta h_{ik} + 2 \Rm_{ijkl} h^{jl} - \Ric_i^l h_{kl} - \Ric_k^l h_{il}.
\end{align}
Brendle made the following crucial observations:
\begin{theorem}[\cite{brendle2013rotational}*{\S 4}] \label{thm:brendle-PDE}
    Let $W$ be a vector field satisfying $\Delta_f W = 0$. Then $h = \L_W g$ satisfies $Lh = 0$. In particular, taking $W = \nabla f$ gives $L\Ric = 0$.
\end{theorem}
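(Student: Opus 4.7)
The plan is to establish the identity
\begin{align}
    L(\L_W g) = \L_{\Delta_f W} g,
\end{align}
which holds for every smooth vector field $W$ on any steady GRS. Both assertions of the theorem then follow: the general case is immediate from the hypothesis $\Delta_f W = 0$, and for the special case $W = \nabla f$ one can check $\Delta_f \nabla f = 0$ directly, by combining the Bochner formula $\Delta \nabla f = \nabla(\Delta f) + \Ric(\nabla f)$ with $\Delta f = R$, the steady GRS conservation law $\nabla R + 2\Ric(\nabla f) = 0$ (from \eqref{eq:identity-1}), and $\nabla_{\nabla f}\nabla f = \Hess_f(\nabla f) = \Ric(\nabla f)$; the cancellation yields $L(\L_{\nabla f}g) = L(2\Ric) = 0$.

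To prove the identity, I would split $L = \Delta_L + \L_{\nabla f}$ and treat each piece separately. Writing $h_{ij} = \nabla_i W_j + \nabla_j W_i$, the first piece is computed by commuting $\nabla^k\nabla_k$ past $\nabla_{(i}$ via the Ricci identity, yielding
\begin{align}
    \Delta h_{ij} = \nabla_i(\Delta W_j) + \nabla_j(\Delta W_i) + E_{ij},
\end{align}
where $E_{ij}$ collects curvature corrections involving $\Rm$, $\Ric$, and $\nabla\Ric$ contracted with $W$ and $\nabla W$; one then adds the $2\Rm(h) - \Ric\cdot h - h\cdot\Ric$ terms that convert $\Delta$ into $\Delta_L$ (see \eqref{eq:lich-lap}). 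For the second piece, the Jacobi identity $[\L_{\nabla f},\L_W] = \L_{[\nabla f,W]}$ combined with the soliton equation $\L_{\nabla f}g = 2\Hess_f = 2\Ric$ and $[\nabla f,W] = \nabla_{\nabla f}W - \nabla_W\nabla f$ gives
\begin{align}
    \L_{\nabla f} h = 2\L_W\Ric + \L_{\nabla_{\nabla f}W}\,g - \L_{\nabla_W\nabla f}\,g.
\end{align}

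Adding the two contributions, the terms $\nabla_{(i}(\Delta W_{j)})$ and $(\L_{\nabla_{\nabla f}W}\,g)_{ij}$ combine into $(\L_{\Delta_f W}\,g)_{ij}$, which is the desired output. The residual terms $2\L_W\Ric - \L_{\nabla_W\nabla f}\,g$, together with the Lichnerowicz curvature corrections from $E_{ij}$ and from $2\Rm(h) - \Ric\cdot h - h\cdot\Ric$, must cancel identically; verifying this cancellation is the main obstacle, and is essentially a tensor-calculus bookkeeping exercise. The cancellation is forced by the steady GRS equation $\Ric = \Hess_f$, which allows one to rewrite $\L_W\Ric$ in terms of third covariant derivatives of $f$; commuting these derivatives introduces $\Rm(\nabla f,\cdot)$-contractions that exactly match the curvature corrections, with the contracted second Bianchi identity $\nabla^j \Ric_{ij} = \tfrac{1}{2}\nabla_i R$ closing the bookkeeping.
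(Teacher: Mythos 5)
Your strategy is the right one, and it is essentially the argument underlying the paper's citation to \cite{brendle2013rotational}*{\S 4} (the paper itself offers no proof beyond that citation): the identity $L(\L_W g) = \L_{\Delta_f W}\, g$ does hold on any steady GRS for every smooth $W$; your computation of the drift piece via $\L_{\nabla f}\L_W g = \L_W\L_{\nabla f} g + \L_{[\nabla f,W]}g = 2\L_W\Ric + \L_{\nabla_{\nabla f}W}g - \L_{\nabla_W\nabla f}g$ is correct; and your verification that $\Delta_f\nabla f = 0$ (which the paper also records in the proof of Theorem \ref{thm:kf}) correctly reduces $L\Ric = 0$ to the general statement.

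The gap is that the one identity everything hinges on is asserted rather than proved: you introduce an unspecified error term $E_{ij}$, declare that all residual terms ``must cancel identically,'' and defer the verification as ``the main obstacle.'' That cancellation \emph{is} the content of the theorem, so this cannot be left as bookkeeping; moreover, the route you sketch for closing it (rewriting $\L_W\Ric$ via third covariant derivatives of $f$ and the second Bianchi identity) indicates the structure of the cancellation has not actually been identified. The clean way to finish is to recognize that your first piece, once $E_{ij}$ is assembled, is exactly the classical formula
\begin{align}
    \Delta_L(\L_W g) \;=\; \L_{\Delta W + \Ric(W)}\, g \;-\; 2\,\L_W\Ric,
\end{align}
valid on \emph{any} Riemannian manifold. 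It is the infinitesimal form of diffeomorphism invariance of the Ricci tensor: apply $D(-2\Ric)(h) = \Delta_L h + \L_{\frac12\nabla(\tr h) - \div h}\,g$ to $h = \L_W g$, for which $\tfrac12\nabla(\tr h) - \div h = -\Delta W - \Ric(W)$ and $D(-2\Ric)(\L_W g) = -2\L_W\Ric$. Adding your drift piece and using $\nabla_W\nabla f = \Hess_f(W) = \Ric(W)$, the terms $\mp 2\L_W\Ric$ and $\pm\L_{\Ric(W)}g$ cancel in pairs, leaving precisely $\L_{\Delta W + \nabla_{\nabla f}W}\,g = \L_{\Delta_f W}\,g$ --- no Bianchi identity and no third derivatives of $f$ are needed at that stage. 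Either prove the displayed formula by the Weitzenb\"ock computation you outline, or cite it; then your two-line cancellation closes the argument.
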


We will prove strong decay estimates for solutions of $Lh = 0$, starting with a computation:
\begin{lemma} \label{lem:h-subharmonic}
    For all sufficiently large $\ell > 0$, there exists a compact set $K_\ell \subset M$ such that for all symmetric 2-tensors $h$ on $M$ satisfying $Lh = 0$, we have
    \begin{align}
        \Delta_f (f^\ell |h|^2) \geq 0 \quad \text{on } M \setminus K_\ell.
    \end{align}
\end{lemma}
\begin{proof}
    Since $Lh = 0$, we have
    \begin{align} \label{eq:Deltafh2}
        \Delta_f |h|^2 &= 2 \inner{\Delta_f h}{h} + 2|\nabla h|^2 = -4\inner{\Rm(h)}{h} + 2|\nabla h|^2.
    \end{align}
    Let $\ell > 0$. Using this and the soliton identity $\Delta_f f = 1$, we get
    \begin{align}
        \Delta_f (f^\ell |h|^2) &= (\Delta_f f^\ell) |h|^2 + f^\ell \Delta_f |h|^2 + 2\inner{\nabla f^\ell}{\nabla|h|^2} \\
        &= \left( \ell f^{\ell-1} + \ell(\ell-1)f^{\ell-2} |\nabla f|^2 \right) |h|^2 \\
        &\quad + f^\ell \left( 2|\nabla h|^2 - 4\inner{\Rm(h)}{h} \right) + 2\ell f^{\ell-1} \inner{\nabla f}{\nabla|h|^2} \\
        &\geq \ell f^{\ell-1} |h|^2 + \ell(\ell-1) f^{\ell-2} |\nabla f|^2 |h|^2 + 2 f^\ell |\nabla h|^2 \\
        &\quad - 4f^\ell |{\Rm}| |h|^2 - 4\ell f^{\ell-1} |\nabla f| |h| |\nabla|h||. \label{eq:barrier-f}
    \end{align}
    By Proposition \ref{prop:roundness-estimates}, we have
    \begin{align} \label{eq:lin-curv-decay}
        |{\Rm}| \leq Cf^{-1}.
    \end{align}
    Moreover, by the absorbing inequality and Kato's inequality, we have
    \begin{align} \label{eq:absorbing-1}
        2|\nabla f||h||\nabla|h|| \leq \frac{f}{\ell} |\nabla h|^2 + \frac{\ell}{f} |\nabla f|^2 |h|^2.
    \end{align}
    Using these facts in \eqref{eq:barrier-f} gives
    \begin{align}
        \Delta_f (f^\ell |h|^2) &\geq (\ell - C) f^{\ell-1} |h|^2 - (\ell+\ell^2) f^{\ell-2} |\nabla f|^2 |h|^2 \\
        &= f^{\ell-2} |h|^2 \left[ (\ell-C) f - (\ell + \ell^2) |\nabla f|^2 \right].
    \end{align}
    Since $|\nabla f|^2 \leq 1$, and $f \to +\infty$ at infinity, it follows that if $\ell > C$, the right-hand side is nonnegative outside a compact set of $M$.
\end{proof}

The next two results are due to Brendle and Zhao--Zhu, respectively.

\begin{proposition}[\cite{brendle2014rotational}*{Lemma 4.1}] \label{prop:brendle-blowdown}
    Consider the shrinking cylinder $(\mathcal{S} \times \R, \bar{g}(t))$, $t \in (0,1)$, where $\bar{g}(t)$ is given by \eqref{eq:gbar-defn}. Let $\bar{h}(t)$, $t \in (0,1)$, be a one-parameter family of symmetric 2-tensors which solve the parabolic equation
    \begin{align}
        \frac{\del}{\del t} \bar{h}(t) = \Delta_{L,\bar{g}(t)} \bar{h}(t),
    \end{align}
    where $\Delta_{L,\bar{g}(t)}$ is the Lichnerowicz Laplacian with respect to $\bar{g}(t)$ (see \eqref{eq:lich-lap}). Moreover, suppose $\bar{h}(t)$ is invariant under translations along the axis of the cylinder, and
    \begin{align}
        |\bar{h}(t)|_{\bar{g}(t)} \leq (1-t)^{-\ell}
    \end{align}
    for all $t \in (0,\frac{1}{2}]$ and some $\ell > 0$. Then there exists $N > 0$ depending on $\ell$ such that
    \begin{align}
        \inf_{\lambda \in \R} \sup_{\mathcal{S} \times \R} |\bar{h}(t) - {\lambda \Ric_{\bar{g}(t)}}|_{\bar{g}(t)} \leq N (1-t)^{\frac{1}{2(n-2)}-\frac{1}{2}}
    \end{align}
    for all $t \in [\frac{1}{2},1)$.
\end{proposition}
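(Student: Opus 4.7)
The plan is to exploit the product structure of the shrinking cylinder and the translation invariance of $\bar{h}$ to decompose the Lichnerowicz heat equation into a family of decoupled ODEs indexed by eigenmodes of spherical operators. I will then show that only one eigenmode fails to be bounded by $(1-t)^{1/(2(n-2)) - 1/2}$, and it is precisely a scalar multiple of $\Ric_{\bar{g}(t)}$.

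\emph{Step 1: Product decomposition.} Write $\bar{g}(t) = \rho(t)^2 \bar{g}_{\mathcal{S}} + dz \otimes dz$ with $\rho(t)^2 = 2(n-2)(1-t)$. The translation invariance of $\bar{h}(t)$ forces the unique decomposition
\begin{equation}
\bar{h}(t) = h_T(t) + \omega(t) \otimes dz + dz \otimes \omega(t) + \phi(t)\,dz \otimes dz,
\end{equation}
where $h_T(t)$ is a symmetric $2$-tensor on $\mathcal{S}$, $\omega(t)$ is a $1$-form on $\mathcal{S}$, and $\phi(t)$ is a function on $\mathcal{S}$. Because the product metric has no mixed curvature and $\partial_z \bar{h} = 0$, a direct computation shows that $\Delta_L$ preserves this block decomposition, so $\partial_t \bar{h} = \Delta_L \bar{h}$ splits into three independent flows for $h_T$, $\omega$, and $\phi$.

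\emph{Step 2: Spectral reduction to Cauchy--Euler ODEs.} Split further $h_T = \phi_T(t)\bar{g}_{\mathcal{S}} + \mathring{h}_T(t)$ into trace and trace-free parts, and pass to the universal cover $\Sph^{n-1} \to \mathcal{S}$. Expand each of $\phi$, $\phi_T$, $\omega$, $\mathring{h}_T$ in eigentensors of the appropriate operator on $(\Sph^{n-1}, \bar{g}_{\Sph^{n-1}})$ (Laplacian on functions, Hodge Laplacian on $1$-forms, Lichnerowicz Laplacian on trace-free symmetric $2$-tensors). For each eigenmode with spherical eigenvalue $\mu$, the evolution reduces to a Cauchy--Euler ODE of the form
\begin{equation}
c'(t) = -\frac{\mu - \sigma}{2(n-2)(1-t)}\,c(t),
\end{equation}
where $\sigma$ is a sector-dependent shift arising from the cylinder curvature. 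Solutions are pure powers $c(t) = c_0 (1-t)^{(\mu - \sigma)/(2(n-2))}$.

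\emph{Step 3: Identify and remove the $\lambda \Ric$ mode.} Direct enumeration of the three spherical spectra shows that the exponent $(\mu - \sigma)/(2(n-2))$ is at least $\frac{1}{2(n-2)} - \frac{1}{2}$ for every eigenmode except one: the constant mode in the $\phi_T$ sector, which produces the tensor $\phi_T(t)\bar{g}_{\mathcal{S}}$. Since $\Ric_{\bar{g}(t)} = (n-2)\bar{g}_{\mathcal{S}}$, this mode is exactly a scalar multiple of $\Ric_{\bar{g}(t)}$. The hypothesis $|\bar{h}(t)|_{\bar{g}(t)} \leq (1-t)^{-\ell}$ on $(0, 1/2]$ bounds all Fourier coefficients at some time in that interval (via a Parseval-type estimate on $\mathcal{S}$), and together with the pure-power form of each $c(t)$ this caps the contribution of every non-exceptional mode on $[1/2, 1)$ by a constant (depending on $\ell$) times $(1-t)^{1/(2(n-2)) - 1/2}$. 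Choosing $\lambda$ to cancel the single exceptional mode then yields the desired estimate.

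\emph{Main obstacle.} The technical core is Step 3: one must enumerate the eigenvalues of the Laplacian, Hodge Laplacian, and Lichnerowicz Laplacian on $\Sph^{n-1}$, keep track of the sector-dependent shifts $\sigma$, and verify that there is a spectral gap isolating the constant $\phi_T$ mode from every other eigenmode with the sharp threshold $\frac{1}{2(n-2)} - \frac{1}{2}$. Once this spectral bookkeeping is in place, the estimate reduces to elementary ODE analysis and Fourier truncation.
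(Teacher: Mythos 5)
Your decomposition-and-spectral-analysis strategy is exactly the argument of Brendle's Lemma 4.1 in \cite{brendle2014rotational}, which is what the paper invokes verbatim here (the only change being the replacement of $\Sph^{n-1}$ by $\mathcal{S}$, handled by your passage to the universal cover and restriction to $\Gamma$-invariant eigenmodes). The spectral bookkeeping you defer in Step 3 — which must also track that the time-dependent norm $|\cdot|_{\bar{g}(t)}$ rescales the tangential, mixed, and $dz\otimes dz$ blocks by different powers of $(1-t)$ — is precisely the content of Brendle's proof and closes as you predict, with the constant multiple of $\bar{g}_{\mathcal{S}}$, i.e.\ of $\Ric_{\bar{g}(t)}$, as the unique mode violating the stated decay.
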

\begin{proof}
    The proof of \cite{brendle2014rotational}*{Lemma 4.1} applies after replacing every occurrence of $(\Sph^{n-1},g_{\Sph^{n-1}})$ there with $(\mathcal{S}, \bar{g}_\infty)$. (The proof uses spectral properties of the Laplacian which still hold on $(\mathcal{S},\bar{g}_\infty)$.)
\end{proof}

\begin{proposition}[\cite{zz}*{Theorem 4.3}] \label{prop:superpoly-decay}
    If $Lh = 0$ on $M$ and $|h| \leq \O(r^{-\frac{1}{2}})$, then there exists $\lambda \in \R$ such that $|h - {\lambda \Ric}| \leq \O(r^{-\ell})$ for every $\ell > 0$.
\end{proposition}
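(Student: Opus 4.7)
The plan is to combine blowdown analysis on the shrinking cylinder (Proposition \ref{prop:brendle-blowdown}) with the maximum principle barriers of Lemma \ref{lem:h-subharmonic}(a), in two stages: first identify the scalar $\lambda$ so that $h' := h - \lambda\Ric$ has strictly improved decay, then bootstrap this decay to arbitrary polynomial order.

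\emph{Stage 1 (identification of $\lambda$).} Take any sequence $p_m \to \infty$ with $r_m := f(p_m)$ and consider the rescaled flows $\hat g^{(m)}(t) = r_m^{-1} \Phi_{r_m t}^* g$, which converge to the shrinking cylinder $(\mathcal{S} \times \R, \bar g(t))$ by asymptotic $\mathcal{S}$-cylindricity. Rescale $h$ parabolically so that the rescaled tensors satisfy the uniform cylinder bound $|\hat h^{(m)}|_{\hat g^{(m)}(t)} \leq C(1-t)^{-1/2}$ coming from $|h|_g = O(r^{-1/2})$. Using the identity \eqref{eq:L2} and the fact (cf. proof of Corollary \ref{cor:area-noncollapsed}) that $\sqrt{r_m}\nabla f$ converges to the axial translation field on the cylinder, the equation $Lh = 0$ passes through the rescaling to the Lichnerowicz heat equation for a translation-invariant smooth limit $\bar h(t)$. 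Proposition \ref{prop:brendle-blowdown} then supplies $\lambda \in \R$ with $|\bar h(t) - \lambda\Ric_{\bar g(t)}|_{\bar g(t)} \leq N(1-t)^{\frac{1}{2(n-2)} - \frac{1}{2}}$; since $\frac{1}{2(n-2)} - \frac{1}{2} > -\frac{1}{2}$ for every $n \geq 3$, unscaling gives the strictly improved pointwise estimate $|h - \lambda\Ric|_g \leq O(r^{-\alpha_0})$ with $\alpha_0 > 1/2$ along the sequence. A standard argument using the unique asymptotic profile of $\Ric$ from Proposition \ref{prop:roundness-estimates} shows that $\lambda$ is independent of the chosen sequence, so the estimate propagates globally on $M$.

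\emph{Stage 2 (bootstrap).} Let $h' := h - \lambda\Ric$. Theorem \ref{thm:brendle-PDE} applied with $W = \nabla f$ gives $L\Ric = 0$, hence $Lh' = 0$. Iterating Stage 1 with $h'$ in place of $h$, the blowdown limit of $h'$ cannot contain a nontrivial $\mu\Ric_{\bar g(t)}$ component (this would grow like $(1-t)^{-1}$, incompatible with the decay of $h'$ already established), so Proposition \ref{prop:brendle-blowdown} applied with $\lambda = 0$ strictly improves the exponent at each round, producing a sequence of estimates $|h'| \leq O(r^{-\alpha_k})$ with $\alpha_k \to \infty$. Once $\alpha_k$ exceeds the threshold $C/2$ from Lemma \ref{lem:h-subharmonic}(a), fix any $\ell$ with $C < \ell < 2\alpha_k$: then $u := f^\ell |h'|^2$ satisfies $\Delta_f u \geq 0$ outside the compact set $K_\ell$ and tends to $0$ at infinity, so the maximum principle on $M \setminus K_\ell$ yields $u \leq \sup_{\partial K_\ell} u < \infty$, and therefore $|h'| \leq C_\ell r^{-\ell/2}$. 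Taking $\ell$ arbitrarily large (with $\alpha_k$ incremented as needed to keep $\alpha_k > \ell/2$) gives $|h'| = O(r^{-N})$ for every $N > 0$.

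\emph{Main obstacle.} The delicate point is arranging that iterating the blowdown step on $h'$ yields strictly improving decay rates, since Proposition \ref{prop:brendle-blowdown} as stated outputs only a single fixed rate $\frac{1}{2(n-2)} - \frac{1}{2}$. The resolution is that once $\lambda\Ric$ has been subtracted, any further blowdown limit of the residue must lie in an eigenspace of the cylinder's Lichnerowicz operator strictly slower-growing than $\Ric$ (otherwise the already-established decay would be violated); a refined spectral argument on the cylinder, essentially the one underpinning Brendle's lemma, then extracts the next eigenmode and gives an improved rate at each iteration. Beyond this, one must carefully track the parabolic rescaling of $h$ and propagate pointwise estimates uniformly from sequences to all points at infinity, in the style of Zhao--Zhu.
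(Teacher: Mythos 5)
Your proposal does not follow the paper's route, and as written it has a genuine gap that its own structure cannot close. The paper's proof is a maximum-principle dichotomy, not a bootstrap: it introduces $A(\rho)=\inf_{\lambda}\sup_{\{f=\rho\}}|h-\lambda\Ric|$ with minimizers $\lambda_m$ at radii $r_m$, applies Lemma \ref{lem:h-subharmonic}(a) and the maximum principle to $f^{\ell}|h-\lambda_m\Ric|^2$ on the \emph{compact annulus} $\{\rho_\ell\le f\le r_m\}$, and then splits according to which boundary term dominates. If the outer boundary dominates along a subsequence (Case 1), the blowdown lemma (Proposition \ref{prop:brendle-blowdown}) forces $h=\lambda\Ric$ exactly on $\{f\ge\rho_\ell\}$; if the inner boundary eventually dominates for every $\ell$ (Case 2), the annulus estimate itself already yields $f^{\ell}|h-\lambda\Ric|\le C_\ell$ for each $\ell$ after letting $r_m\to\infty$. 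Because the outer boundary is controlled by the dichotomy rather than by an a priori decay hypothesis, no iteration is needed: each $\ell$ is handled in one shot.

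Your Stage 2 is where the proposal breaks. First, the maximum-principle step is logically vacuous: you assume $|h'|\le \O(r^{-\alpha_k})$ with $\alpha_k>\ell/2$ and conclude $|h'|\le C_\ell r^{-\ell/2}$, which is \emph{weaker} than the hypothesis. The entire burden therefore falls on the claim that iterated blowdowns produce rates $\alpha_k\to\infty$, and that claim is not justified. Proposition \ref{prop:brendle-blowdown} outputs only the single fixed exponent $\tfrac{1}{2(n-2)}-\tfrac12$; ruling out the $\Ric$-mode in the limit does not let you advance past the next eigenvalue, and the "refined spectral argument" you invoke does not transfer from the exact cylinder to the soliton, since the soliton is only asymptotically cylindrical and orthogonality to higher Lichnerowicz eigenmodes is not preserved under the rescaled flows. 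Stage 1 also glosses over real issues (Cheeger--Gromov limits are subsequential, the limit estimate must be unscaled to a pointwise estimate on $h$ at finite scales, and independence of $\lambda$ from the sequence must be argued via the minimizing property of $\lambda_m$ rather than the asymptotics of $\Ric$ alone); these are exactly the points the $A(\rho)$ formalism of Zhao--Zhu is designed to handle. To repair the argument, replace the bootstrap by the annulus dichotomy: run the weighted maximum principle for every large $\ell$ simultaneously and use the blowdown only to dispose of the case where the outer boundary term dominates.
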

\begin{proof}[Proof sketch]
    Due to differences in organization, we will outline the proof, omitting steps which can essentially be copied down from the original source.

    Let $h$ satisfy $Lh = 0$ and $|h| \leq \O(r^{-\frac{1}{2}})$. Consider the function
    \begin{align} \label{eq:Ar-defn}
        A(\rho) = \inf_{\lambda \in \R} \sup_{\{f=\rho\}} |h - {\lambda\Ric}|.
    \end{align}
    Let $r_m \to \infty$ be a sequence of numbers, and denote by $\lambda_m$ the minimizing $\lambda$'s associated with $A(r_m)$.

    By Theorem \ref{thm:brendle-PDE}, we have $L(h - \lambda_m \Ric) = 0$ for each $m$. Then by Lemma \ref{lem:h-subharmonic} and the maximum principle, for each large $\ell > 0$ there exists $\rho_\ell > 0$ such that for all $m$,
    \begin{align}
        \sup_{\{\rho_\ell \leq f \leq r_m\}} f^\ell |h - {\lambda_m\Ric}|^2 \leq \max\left\{ \rho_\ell^\ell \sup_{\{f=\rho_\ell\}} |h - {\lambda_m \Ric}|^2, r_m^\ell \sup_{\{f=r_m\}} |h - {\lambda_m \Ric}|^2 \right\}.
    \end{align}
    Taking square roots and redefining $\rho_\ell$ appropriately, this becomes
    \begin{equation} \label{eq:fmp}
        \sup_{\{\rho_\ell \leq f \leq r_m\}} f^\ell |h - \lambda_m\Ric| \leq \max\left\{ \rho_\ell^\ell \sup_{\{f=\rho_\ell\}} |h - {\lambda_m \Ric}|, r_m^\ell \sup_{\{f=r_m\}} |h - {\lambda_m \Ric}| \right\}.
    \end{equation}
    We divide into two cases:

    \textbf{Case 1:} there exists $\ell > 0$ and a subsequence of $r_m$ (still denoted $r_m$) such that
    \begin{align}
        \rho_\ell^\ell \sup_{\{f=\rho_\ell\}} |h - {\lambda_m \Ric}| \leq r_m^\ell \sup_{\{f=r_m\}} |h - {\lambda_m \Ric}|.
    \end{align}
    This is Case 1 in the proof of \cite{zz}*{Theorem 4.3}. Following the argument there (which uses Proposition \ref{prop:brendle-blowdown}), we see that the $\lambda_m$'s are all equal to a constant $\lambda$, and
    \begin{align}
        h = \lambda\Ric \quad \text{on } \{f \geq \rho_\ell\}.
    \end{align}
    This clearly implies $|h - {\lambda\Ric}| \leq \O(r^{-\ell})$ for every $\ell > 0$.

    \textbf{Case 2:} for all $\ell > 0$, there exists $R_\ell > 0$ such that for all $r_m > R_\ell$,
    \begin{align}
        \rho_\ell^\ell \sup_{\{f=\rho_\ell\}} |h - {\lambda_m \Ric}| > r_m^\ell \sup_{\{f=r_m\}} |h - {\lambda_m \Ric}|.
    \end{align}
    Then by \eqref{eq:fmp}, for each $\ell > 0$, and for all $m$ such that $r_m > R_\ell$, one has
    \begin{align} \label{eq:asdf}
        \sup_{\{\rho_\ell \leq f \leq r_m\}} f^\ell |h - {\lambda_m \Ric}| \leq \rho_\ell^\ell \sup_{\{f=\rho_\ell\}} |h - {\lambda_m \Ric}|.
    \end{align}
    Following the arguments in Case 2 in the proof of \cite{zz}*{Theorem 4.3} (in particular Claim 4.4 there), we conclude that the sequence $\lambda_m$ is uniformly bounded, so a subsequence of the $\lambda_m$ converges to some $\lambda \in \R$. Fixing an arbitrary $\ell > 0$ in \eqref{eq:asdf} and taking $m \to \infty$, we get
    \begin{align}
        \sup_{\{f \geq \rho_\ell\}} f^\ell |h - {\lambda\Ric}| \leq \rho_\ell^\ell \sup_{\{f=\rho_\ell\}} |h-{\lambda\Ric}|.
    \end{align}
    Hence, for all $\rho \geq \rho_\ell$,
    \begin{align}
        \sup_{\{f=\rho\}} |h-{\lambda\Ric}| \leq \frac{\rho_\ell^\ell}{\rho^\ell} \sup_{\{f=\rho_\ell\}} |h - {\lambda\Ric}| \leq C_\ell \rho^{-\ell}
    \end{align}
    where $C_\ell > 0$ depends on $\ell$. So $|h - {\lambda\Ric}| \leq \O(r^{-\ell})$ for each $\ell > 0$.
\end{proof}

Finally, we record a simple application of soliton identities and the maximum principle.
\begin{lemma} \label{lem:chan-exp}
    Suppose $u$ is a function such that $\Delta_f u \geq 0$ outside a compact set and $u \to 0$ at infinity. Then $u \leq Ce^{-f}$ on $M$ for some $C>0$.
\end{lemma}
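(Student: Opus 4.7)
The plan is to use $Ce^{-f}$ as an explicit supersolution barrier and conclude via the weak maximum principle for the drift Laplacian $\Delta_f$. The key soliton-specific observation is that $e^{-f}$ is itself drift-superharmonic on $(M,g)$: tracing the steady GRS equation gives $\Delta f = R$, and then using $\nabla e^{-f} = -e^{-f}\nabla f$ yields
\begin{align}
\Delta_f e^{-f} = \Delta e^{-f} + \langle \nabla f, \nabla e^{-f}\rangle = \bigl(e^{-f}|\nabla f|^2 - e^{-f}\Delta f\bigr) - e^{-f}|\nabla f|^2 = -Re^{-f} \leq 0,
\end{align}
by \eqref{eq:R0} (in fact $R > 0$).

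Let $K \subset M$ denote a compact set outside of which $\Delta_f u \geq 0$. Since $u$ is continuous and tends to zero at infinity, $u$ is bounded on $M$, while $e^{-f}$ is bounded below by a positive constant on $K$. Therefore one can choose $C > 0$ large enough that $u \leq Ce^{-f}$ on all of $K$, and in particular on $\partial K$. Set $w := u - Ce^{-f}$. The computation above gives $\Delta_f w = \Delta_f u + CRe^{-f} \geq 0$ on $M \setminus K$, while $w \leq 0$ on $\partial K$ and $w \to 0$ at infinity (since $f$ is proper and $u \to 0$).

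Now the weak maximum principle on the unbounded region $M \setminus K$ yields $w \leq 0$ everywhere there: if instead $w$ attained a positive value $\alpha > 0$ somewhere in $M \setminus K$, then since $w \leq 0$ on $\partial K$ and $w \to 0$ at infinity, the superlevel set $\{w \geq \alpha/2\}$ would be a nonempty compact subset of the interior of $M \setminus K$, and applying the standard weak maximum principle on this bounded set would contradict $\Delta_f w \geq 0$. Combined with the choice of $C$ guaranteeing $u \leq Ce^{-f}$ on $K$, this gives the lemma.

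I do not foresee a serious obstacle: the argument is a clean maximum-principle barrier argument, and the only non-routine ingredient is the soliton identity $\Delta_f e^{-f} = -Re^{-f}$ that singles out $e^{-f}$ as precisely the correct exponential supersolution for $\Delta_f$.
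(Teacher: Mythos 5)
Your proof is correct and follows essentially the same route as the paper: the barrier $Ce^{-f}$ with the identity $\Delta_f e^{-f} = -Re^{-f} \leq 0$, followed by the weak maximum principle on the exterior region using $w \to 0$ at infinity. The only cosmetic differences are that you derive the identity from $\Delta f = R$ rather than from $\Delta_f f = 1$ together with $R + |\nabla f|^2 = 1$, and you work with a general compact set $K$ instead of a sublevel set $\{f \leq \rho\}$.
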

\begin{proof}
    Recall that $R \geq 0$ on any steady GRS. By this and \eqref{eq:identity-1},
    \begin{align}
        \Delta_f e^{-f} = -e^{-f} \Delta_f f + e^{-f} |\nabla f|^2 = e^{-f} (-1 + |\nabla f|^2) = -R e^{-f} \leq 0.
    \end{align}
    Choose $\rho > 0$ large so that $\Delta_f u \geq 0$ on $\{f > \rho\}$. Choose $C > 0$ large so that $u - Ce^{-f} < 0$ on $\{f=\rho\}$. We have
    \begin{align} \label{eq:068}
        \Delta_f(u - Ce^{-f}) \geq 0 \quad \text{on } \{f > \rho\},
    \end{align}
    and $u - Ce^{-f} \to 0$ at infinity. It follows by the maximum principle that $u - Ce^{-f} \leq 0$ on $\{f > \rho\}$. Suitably increasing $C$, the lemma follows.
\end{proof}

We can now prove exponential decay of solutions to $Lh = 0$ up to a multiple of the Ricci tensor.
\begin{lemma} \label{lem:h-in-L2}
    If $Lh = 0$ on $M$ and $|h| \leq \O(r^{-\frac{1}{2}})$, then $h - \lambda\Ric \in L^2(e^f)$ for some $\lambda \in \R$.
\end{lemma}
\begin{proof}
    Let $h$ be as in the lemma.
    By Proposition \ref{prop:superpoly-decay}, we can find $\lambda \in \R$ such that $\tilde{h} = h - \lambda\Ric$ satisfies $f^\ell |\tilde{h}|^2 \to 0$ at infinity for all $\ell > 0$. Moreover, $L\tilde{h} = 0$ by Theorem \ref{thm:brendle-PDE}. By Lemma \ref{lem:h-subharmonic}, it holds for each sufficiently large $\ell > 0$ that
    \begin{align}
        \Delta_f(f^\ell |\tilde{h}|^2) \geq 0 \quad \text{outside a compact set}.
    \end{align}
    Then Lemma \ref{lem:chan-exp} gives $f^\ell |\tilde{h}|^2 \leq \O(e^{-f})$ outside a compact set. Since $f \sim r$ and $(M,g)$ has volume growth of order $r^{\frac{n+1}{2}}$, this implies $\tilde{h} \in L^2(e^f)$.
\end{proof}

Now we find exact Killing fields on $(M,g)$, following \cite{brendle2013rotational}*{Theorem 7.1}; the number of these will depend on a dimension bound for $\ker_{L^2(e^f)}(L)$. We denote by $\mathfrak{I}(\mathcal{S},\bar{g}_\infty)$ the space of Killing fields on $(\mathcal{S},\bar{g}_\infty)$, and $\bar{d} = \dim \mathfrak{I}(\mathcal{S},\bar{g}_\infty)$.

\begin{proposition} \label{prop:exact-KFs}
    Suppose $\dim\ker_{L^2(e^f)}(L) \leq d$ for some nonnegative integer $d$. Then there exists a $(\bar{d}-d)$-dimensional vector space $\mathcal{V}$ of Killing fields on $(M,g)$ such that for each $W \in \mathcal{V}$,
    \begin{enumerate}[label=(\alph*)]
        \item $[W,\nabla f] = 0$.
        \item $\inner{W}{\nabla f} = 0$, i.e. $W$ is tangent to the level sets of $f$.
        \item $W$ is a Killing field for $(M,g)$ and for each sufficiently far level set of $f$.
        \item $|\nabla^k W| \leq \O(r^{\frac{1}{2}-\frac{k}{2}})$ for each $k \geq 0$.
        \item There is a unique Killing field $\bar{U}$ on $(\mathcal{S},\bar{g}_\infty)$ such that if $U$ is a smooth vector field on $M$ which extends $\bar{U}$, then $|\nabla^k (W-U)| \leq \O(r^{-\frac{k}{2}})$ for each $k \geq 0$. The map $\mathcal{V} \to \mathfrak{I}(\mathcal{S},\bar{g}_\infty)$, $W \mapsto \bar{U}$ is linear and injective.
    \end{enumerate}
\end{proposition}
\begin{proof}
    Let $\{\bar{U}_i\}_{i=1}^{\bar{d}}$ be a basis for $\mathfrak{I}(\mathcal{S},\bar{g}_\infty)$, and let $\{U_i\}_{i=1}^{\bar{d}}$ be smooth vector fields on $M$ which extend $\bar{U}_i$. Proposition \ref{prop:approx-KF} gives vector fields $\{\tilde{W}_i\}_{i=1}^{\bar{d}}$ such that
    \begin{align}
        \Delta_f \tilde{W}_i &= 0, \label{eq:Deltaf-W} \\
        |\nabla^k(\tilde{W}_i-U_i)| &\leq \O(r^{-\frac{k}{2}}) \text{ for each } k \geq 0, \label{eq:O1-bound} \\
        |\nabla^k \tilde{W}_i| &\leq \O(r^{\frac{1}{2}-\frac{k}{2}}) \text{ for each } k \geq 0, \label{eq:kWi-bound} \\
        |\L_{\tilde{W}_i} g| &\leq \O(r^{-\frac{1}{2}}).
    \end{align}
    By Theorem \ref{thm:brendle-PDE}, we have $L(\L_{\tilde{W}_i}g) = 0$ and $L(\L_{\nabla f}g) = 0$. Then Lemma \ref{lem:h-in-L2} gives $\lambda_i \in \R$ such that $W_i := \tilde{W}_i - \lambda_i \nabla f$ satisfy $\L_{W_i} g \in \ker_{L^2(e^f)}(L)$. By the weighted Bochner formula and Corollary \ref{cor:shi-steady}, we see that
    \begin{align}
        \Delta_f W_i &= 0, \\
        |\nabla^k(W_i - U_i)| &\leq \O(r^{-\frac{k}{2}}) \text{ for each } k \geq 0, \label{eq:O1-bound-2} \\
        |\nabla^k W_i| &\leq \O(r^{\frac{1}{2}-\frac{k}{2}}) \text{ for each } k \geq 0. \label{eq:kWi-bound-2}
    \end{align}
    
    We claim that the vector fields $\{W_i\}_{i=1}^{\bar{d}}$ are linearly independent. To see this, fix a large $\rho$ and let $\pi_\rho$ be the pointwise orthogonal projection of vector fields to the hypersurface $\{f=\rho\}$. Then
    \begin{align} \label{eq:Wi-decomp}
        \pi_\rho(W_i) = \pi_\rho(\tilde{W}_i) = \pi_\rho(U_i) + \pi_\rho(\tilde{W}_i - U_i) = \bar{U}_i + \pi_\rho(\tilde{W}_i - U_i).
    \end{align}
    Moreover, by \eqref{eq:O1-bound} and Lemma \ref{lem:g-closeness}, we have on $\{f=\rho\}$ the estimate
    \begin{align}
        \sup_{\{f=\rho\}} |\pi_\rho(\tilde{W}_i - U_i)|_{\bar{g}_\infty}^2 \leq \frac{C}{\rho} \sup_{\{f=\rho\}} |\pi_\rho(\tilde{W}_i - U_i)|_g^2 = \O(\rho^{-1}).
    \end{align}
    Since $\bar{U}_i$ are fixed and linearly independent on $\{f=\rho\}$, and this is an open condition with respect to the $C^0(\bar{g}_\infty)$ topology, it follows from \eqref{eq:Wi-decomp} that $\{\pi_\rho(W_i)\}_{i=1}^{\bar{d}}$ is linearly independent on $\{f=\rho\}$ for all large $\rho$. Hence, $\{W_i\}_{i=1}^{\bar{d}}$ is linearly independent on $M$, as claimed.

    Let $\mathcal{V}' = \mathrm{span}\{W_i\}_{i=1}^{\bar{d}}$, so that $\dim\mathcal{V}' = \bar{d}$ by the last paragraph. Define a map $\mathcal{R}: \mathcal{V}' \to \mathfrak{I}(\mathcal{S},\bar{g}_\infty)$ by sending $W_i$ to $\bar{U}_i$ and extending linearly. This map is clearly injective. Also define a map $\mathcal{T}: \mathcal{V}' \to \ker_{L^2(e^f)}(L)$ by $W \mapsto \L_W g$.

    The assumption $\dim\ker_{L^2(e^f)}(L) \leq d$ implies $\dim \ker \mathcal{T} \geq \bar{d}-d$. Let $\mathcal{V} = \ker\mathcal{T}$. Then every $W \in \mathcal{V}$ is a Killing field for $(M,g)$.
    The general identity \eqref{eq:div-identity} gives $\Delta W + \Ric(W) = 0$, so $[W,\nabla f] = \Ric(W) - \nabla_{\nabla f}W = -\Delta_f W = 0$. Moreover, we have
    \begin{align} \label{eq:hess-Lwf}
        \nabla^2(\L_W f) = \L_W(\nabla^2 f) = \frac{1}{2} \L_W \L_{\nabla f} g = \frac{1}{2} \L_{\nabla f} \L_W g = 0,
    \end{align}
    so the function $\L_W f = \inner{W}{\nabla f}$ is constant. Since $f$ attains a minimum by Lemma \ref{lem:f-r-comparable}, we conclude that $\inner{W}{\nabla f} = 0$ vanishes identically. Thus, we have proved (a), (b), and (c). Part (d) follows from \eqref{eq:kWi-bound-2}. Part (e) follows from restricting the map $\mathcal{R}: \mathcal{V}' \to \mathfrak{I}(\mathcal{S},\bar{g}_\infty)$ to $\mathcal{V}$, and applying \eqref{eq:O1-bound-2}.
\end{proof}

\subsection{Killing fields on asymptotically conical expanding GRSs}

In this subsection, we fix an asymptotically $(\mathcal{S},\bar{g}_\infty)$-conical expanding GRS $(M^n,g,f)$. We will prove an analog of Proposition \ref{prop:exact-KFs} in this setting; see Proposition \ref{prop:exact-KFs-EGS}. The method of proof is essentially the same.

The following is \cite{chodosh}*{Lemma 4.1}:
\begin{lemma} \label{lem:chodosh}
    Let $\epsilon \in (0,\frac{1}{\sqrt{2}})$, and let $Q$ be a vector field on $M$ such that $|Q| \leq \O(r^{-2\epsilon})$.
    If a vector field $V$ satisfies $(\Delta_f - \frac{1}{2})V = Q$ in $\{f \leq \rho^2\}$, then
    \begin{align}
        \sup_{\{f \leq \rho^2\}} \left( |V| - B \left( f+\frac{n}{2}\right)^{-\epsilon} \right) \leq \max \left\{ \sup_{\{f=\rho^2\}} |V| - B \left( \rho^2 + \frac{n}{2} \right)^{-\epsilon}, 0 \right\}
    \end{align}
    for some $B > 0$ independent of $\rho$.
\end{lemma}

The next proposition is \cite{chodosh}*{Proposition 4.2}. The proof uses Lemma \ref{lem:chodosh} is largely identical: the only difference is that we use interior parabolic Schauder estimates rather than a gradient estimate to obtain decay estimates for all derivatives of $V$. This is possible given our decay assumptions on $Q$.
\begin{proposition} \label{prop:EGS-solve-PDE}
    Let $Q$ be a vector field on $M$ such that $|\nabla^k Q| \leq \O(r^{-\epsilon-1-k})$ for each $k \geq 0$. Then there exists a vector field $V$ on $M$ and $\delta > 0$ satisfying $(\Delta_f - \frac{1}{2}) V = Q$, with $|\nabla^k V| \leq \O(r^{-\delta-k})$ for each $k \geq 0$.
\end{proposition}

We may now establish an analog of Proposition \ref{prop:approx-KF}:
\begin{proposition} \label{prop:approx-KF-EGS}
    Let $\bar{U}$ be a Killing field on $(\mathcal{S},\bar{g}_\infty)$, and let $U$ be a smooth vector field on $M$ which extends $\bar{U}$. Then there is a smooth vector field $W$ on $M$ such that
    \begin{align}
        \Delta_f W - \frac{1}{2}W &= 0, \\
        |\nabla^k(W-U)| &\leq \O(r^{-\delta-k}) \text{ for each } k \geq 0, \\
        |\nabla^k W| &\leq \O(r^{1-k}) \text{ for each } k \geq 0, \\
        |\L_W g| &\leq \O(r^{-\delta}),
    \end{align}
    for some $\delta > 0$.
\end{proposition}
\begin{proof}
    Since $(M,g)$ is asymptotically $(\mathcal{S}, \bar{g}_\infty)$-conical, we have
    \begin{align} \label{eq:nabla-k-U-bound}
        |\nabla^k U| \leq \O(r^{1-k}) \quad \text{for each } k\geq 0.
    \end{align}
    Since $\L_U g = -\L_U(\phi_*\bar{g} - g) = \nabla U \ast (\phi_*\bar{g}-g) + U \ast \nabla(\phi_*\bar{g}-g)$, this implies by \eqref{eq:acon-diffeo-decay} that
    \begin{align}
        |\nabla^k \L_U g| \leq \O(r^{-\epsilon-k}) \quad \text{for each } k\geq 0.
    \end{align}
    Using the general identity \eqref{eq:div-identity} which holds on any Riemannian manifold, it follows that
    \begin{align} \label{eq:459594}
        |\nabla^k(\Delta U + \Ric(U))|  \leq C|\nabla^{k+1} \L_U g| \leq \O(r^{-\epsilon-1-k}) \quad \text{for each } k \geq 0.
    \end{align}
    Since $[U,\frac{\nabla f}{|\nabla f|^2}] = 0$ outside a compact set, and we have \eqref{eq:EGS-2}, it follows that
    \begin{align}
        \nabla_{\nabla f} U - \Ric(U) - \frac{1}{2}U = -[U,\nabla f] = -U(|\nabla f|^2) \frac{\nabla f}{|\nabla f|^2} = \inner{U}{\nabla R} \frac{\nabla f}{|\nabla f|^2}
    \end{align}
    outside a compact set. By Lemma \ref{lem:EGS-curv-estimates} and \eqref{eq:nabla-k-U-bound}, this implies
    \begin{align} \label{eq:110}
        |\nabla^k (\nabla_{\nabla f} U - \Ric(U) - \frac{1}{2}U)| \leq \O(r^{-3-k}) \quad \text{for all } k \geq 0.
    \end{align}
    Using \eqref{eq:459594} and \eqref{eq:110}, we get
    \begin{align} \label{eq:vf-pde-estimate}
        |\nabla^k (\Delta_f U - \frac{1}{2}U)| \leq |\nabla^k (\Delta U + \Ric(U))| + |\nabla^k (\nabla_{\nabla f}U - \Ric(U) - \frac{1}{2}U)| \leq \O(r^{-\epsilon-1-k}).
    \end{align}
    By Proposition \ref{prop:EGS-solve-PDE}, there exists a vector field $V$ satisfying
    \begin{align}
        \Delta_f V - \frac{1}{2} V = -\Delta_f U + \frac{1}{2} U
    \end{align}
    on $M$,
    with $|\nabla^k V| \leq \O(r^{-\delta-k})$ for each $k$. Setting $W := U+V$, we are done.
\end{proof}

\begin{theorem}[\cite{chodosh}*{Proposition 3.1}] \label{thm:brendle-PDE-EGS}
    Let $W$ be a vector field satisfying $\Delta_f W - \frac{1}{2}W = 0$. Then $h = \L_W g$ satisfies $Lh = 0$.
\end{theorem}

As in the previous subsection, we will prove strong decay estimates for solutions of $Lh = 0$. This occupies the next four lemmas.

\begin{lemma} \label{lem:h-subharmonic-expander}
    For each $\ell > 0$, there exists a compact set $K_\ell \subset M$ such that for all symmetric 2-tensors $h$ on $M$ satisfying $Lh = 0$, we have
    \begin{align}
        \Delta_f(f^\ell |h|^2) \geq 0 \quad \text{on } M \setminus K_\ell.
    \end{align}
\end{lemma}
\begin{proof}
    Since $Lh = 0$, we have
    \begin{align}
        \Delta_f |h|^2 = -4\inner{{\Rm}(h)}{h} + 2|\nabla h|^2.
    \end{align}
    Let $\ell > 0$. Mimicking the computation \eqref{eq:barrier-f} but using expander identities, we get
    \begin{align}
        \Delta_f(f^\ell |h|^2) &\geq \ell f^\ell |h|^2 + \left(\mu + \frac{n}{2}\right) \ell f^{\ell-1} |h|^2 + \ell(\ell-1) f^{\ell-2} |\nabla f|^2 |h|^2 + 2 f^\ell |\nabla h|^2 \\
        &\quad - 4f^\ell |{\Rm}| |h|^2 - 4\ell f^{\ell-1} |\nabla f| |h| |\nabla|h||. \label{eq:expander-deltaf}
    \end{align}
    By Lemma \ref{lem:EGS-curv-estimates}, we have
    \begin{align} \label{eq:Rm-expander}
        |{\Rm}| \leq Cf^{-1}, \quad |\nabla f|^2 \leq Cf.
    \end{align}
    Moreover, by the absorbing inequality and Kato's inequality, we have
    \begin{align} \label{eq:absorbing-2}
        2|\nabla f||h||\nabla|h|| \leq \frac{f}{\ell} |\nabla h|^2 + \frac{\ell}{f} |\nabla f|^2 |h|^2.
    \end{align}
    Using these facts in \eqref{eq:expander-deltaf} gives
    \begin{align}
        \Delta_f(f^\ell |h|^2) \geq \ell f^\ell |h|^2 + \left( \frac{n}{2} \ell - C - (\ell+\ell^2)C \right) f^{\ell-1} |h|^2.
    \end{align}
    Since $f \to +\infty$ at infinity, this is nonnegative outside a compact set of $M$.
\end{proof}

\begin{lemma} \label{lem:h-subharmonic-expander-2}
    For each $\epsilon \in (0,1)$, there exists a compact set $K_\epsilon \subset M$ such that for all symmetric 2-tensors $h$ on $M$ satisfying $Lh = 0$, we have
    \begin{align}
        \Delta_f(e^{\epsilon f}|h|^2) \geq 0 \quad \text{on } M \setminus K_\epsilon.
    \end{align}
\end{lemma}
\begin{proof}
    Compute
    \begin{align}
        \Delta_f(e^{\epsilon f}|h|^2) &= (\epsilon e^{\epsilon f} \Delta_f f + \epsilon^2 e^{\epsilon f} |\nabla f|^2) |h|^2 \\
        &\quad + e^{\epsilon f} (2|\nabla h|^2 - 4\inner{{\Rm}(h)}{h}) + 2\epsilon e^{\epsilon f} \inner{\nabla f}{\nabla |h|^2} \\
        &\geq e^{\epsilon f}|h|^2 \left(\epsilon \left(f+\frac{n}{2}\right) + \epsilon^2 |\nabla f|^2 \right) + 2e^{\epsilon f}|\nabla h|^2 \\
        &\quad - 4e^{\epsilon f}|{\Rm}| |h|^2 - 4\epsilon e^{\epsilon f} |\nabla f| |h| |\nabla|h||. \label{eq:EE1}
    \end{align}
    By the absorbing inequality and Kato's inequality, we have
    \begin{align}
        2|\nabla f| |h| |\nabla|h|| \leq \epsilon^{-1} |\nabla h|^2 + \epsilon |\nabla f|^2 |h|^2.
    \end{align}
    Using this, \eqref{eq:Rm-expander}, and \eqref{eq:EGS-2} in \eqref{eq:EE1}, we obtain
    \begin{align}
        \Delta_f(e^{\epsilon f}|h|^2) &\geq e^{\epsilon f} |h|^2 \left( \epsilon \left(f + \frac{n}{2}\right) - \epsilon^2 |\nabla f|^2 - Cf^{-1} \right) \\
        &= e^{\epsilon f} |h|^2 \left( (\epsilon - \epsilon^2)f + \epsilon \frac{n}{2} + \epsilon^2 R - Cf^{-1} \right).
    \end{align}
    If $\epsilon \in (0,1)$, then this is nonnegative outside a compact set of $M$.
\end{proof}

\begin{lemma} \label{lem:chan-exp-expander}
    Suppose $u$ is a function such that $\Delta_f u \geq 0$ outside a compact set and $u \to 0$ at infinity. Then $u \leq Ce^{-f}$ on $M$ for some $C > 0$.
\end{lemma}
\begin{proof}
    The proof is the same as Lemma \ref{lem:chan-exp}, except that we use the estimate
    \begin{align}
        \Delta_f e^{-f} = e^{-f}(-\Delta_f f + |\nabla f|^2) = -e^{-f} \Delta f = -e^{-f} \left(R+\frac{n}{2}\right) \leq 0,
    \end{align}
    where we have used \eqref{eq:EGS-1} as well as the trace of \eqref{eq:EGS-equation}.
\end{proof}

\begin{lemma} \label{lem:h-in-L2-EGS}
    If $Lh = 0$ on $M$ and $|h| \leq \O(r^{-\delta})$ for some $\delta > 0$, then $h \in L^2(e^f)$.
\end{lemma}
\begin{proof}
    Let $\ell \in (0, \delta)$, so that $f^\ell |h|^2 \to 0$ at infinity. By Lemma \ref{lem:h-subharmonic-expander}, we have
    \begin{align}
        \Delta_f(f^\ell |h|^2) \geq 0 \quad \text{outside a compact set}.
    \end{align}
    Then Lemma \ref{lem:chan-exp-expander} gives $f^\ell |h|^2 \leq \O(e^{-f})$. Hence, $e^{\frac{1}{2}f} |h|^2 \to 0$ at infinity. By Lemma \ref{lem:h-subharmonic-expander-2}, we have
    \begin{align}
        \Delta_f(e^{\frac{1}{2}f} |h|^2) \geq 0 \quad \text{outside a compact set}.
    \end{align}
    Then Lemma \ref{lem:chan-exp-expander} gives $e^{\frac{1}{2}f} |h|^2 \leq \O(e^{-f})$. This implies $\int_M |h|^2 e^f < \infty$ since $(M,g)$ is asymptotically $(\mathcal{S}, \bar{g}_\infty)$-conical.
\end{proof}

We can now prove an analog of Proposition \ref{prop:exact-KFs}. As before, $\bar{d}$ denotes the dimension of $\mathfrak{I}(\mathcal{S},\bar{g}_\infty)$, the space of Killing fields on $(\mathcal{S},\bar{g}_\infty)$.
\begin{proposition} \label{prop:exact-KFs-EGS}
    Suppose $\dim\ker_{L^2(e^f)}(L) \leq d$ for some nonnegative integer $d$. Then there exists a $(\bar{d}-d)$-dimensional vector space $\mathcal{V}$ of Killing fields on $(M,g)$ such that for each $W \in \mathcal{V}$,
    \begin{enumerate}[label=(\alph*)]
        \item $[W,\nabla f] = 0$.
        \item $\inner{W}{\nabla f} = 0$, i.e. $W$ is tangent to the level sets of $f$.
        \item $W$ is a Killing field for $(M,g)$ and for each sufficiently far level set of $f$.
        \item $|\nabla^k W| \leq \O(r^{1-k})$ for each $k \geq 0$.
        \item There is a unique Killing field $\bar{U}$ on $(\mathcal{S},\bar{g}_\infty)$ such that if $U$ is a smooth vector field on $M$ which extends $\bar{U}$, then $|\nabla^k (W-U)| \leq \O(r^{-\delta-k})$ for each $k \geq 0$. The map $\mathcal{V} \to \mathfrak{I}(\mathcal{S},\bar{g}_\infty)$, $W \mapsto \bar{U}$ is linear and injective.
    \end{enumerate}
\end{proposition}
\begin{proof}
    The proof is essentially identical to that of Proposition \ref{prop:exact-KFs}, after making the following substitutions: Proposition \ref{prop:approx-KF} $\mapsto$ Proposition \ref{prop:approx-KF-EGS}, Theorem \ref{thm:brendle-PDE} $\mapsto$ Theorem \ref{thm:brendle-PDE-EGS}, Lemma \ref{lem:h-in-L2} $\mapsto$ Lemma \ref{lem:h-in-L2-EGS}, and Lemma \ref{lem:g-closeness} $\mapsto$ Lemma \ref{lem:g-closeness-EGS}.
\end{proof}

\subsection{Integrating Killing fields to global symmetries} \label{subsec:integrate-KFs}

In this subsection, $(M^n,g,f)$ is either an asymptotically an $\mathcal{S}$-cylindrical steady GRS or an asymptotically $(\mathcal{S},\bar{g}_\infty)$-conical expanding GRS. We also assume that $(\mathcal{S},\bar{g}_\infty)$ is homogeneous, meaning its isometry group acts transitively. (In the steady case, recall that $\bar{g}_\infty$ is the round metric on $\mathcal{S}$ of sectional curvature 1 coming from Lemma \ref{lem:g-closeness}.) Our goal here is to use the results from the last two subsections to prove Theorem \ref{thm:key-symmetry-2}.

Let $\I(M,g)$ be the isometry group of $(M,g)$ and $\I^0(M,g)$ its identity component. These are finite-dimensional Lie groups by the Myers--Steenrod theorem. Its Lie algebra $\mathfrak{I}(M,g)$ is the space of Killing fields.

Let $\mathcal{V} \subseteq \mathfrak{I}(M,g)$ be given by Proposition \ref{prop:exact-KFs} in the steady case, or Proposition \ref{prop:exact-KFs-EGS} in the expanding case. Let $\mathcal{V}_\rho \subseteq \mathfrak{I}(\mathcal{S},g_\rho)$ be the restriction of vector fields in $\mathcal{V}$ to the level set $\{f=\rho\}$ for sufficiently large $\rho$. Let $\mathfrak{g}$ be the Lie subalgebra of $\mathfrak{I}(M,g)$ generated by $\mathcal{V}$, and let $G$ be the unique connected Lie subgroup of $\I^0(M,g)$ generated by $\mathfrak{g}$. We define $\mathfrak{g}_\rho \subseteq \mathfrak{I}(\mathcal{S},g_\rho)$ and $G_\rho \subseteq \I^0(\mathcal{S},g_\rho)$ analogously.

\begin{corollary} \label{cor:g-jacobi}
    For any $X \in \mathfrak{g}$, we have $[X,\nabla f] = 0$ and $\inner{X}{\nabla f} = 0$.
\end{corollary}
\begin{proof}
    That $[X,\nabla f] = 0$ follows from Proposition \ref{prop:exact-KFs}(a) or Proposition \ref{prop:exact-KFs-EGS}(a), and the Jacobi identity. Reasoning as in \eqref{eq:hess-Lwf} gives $\inner{X}{\nabla f} = 0$.
\end{proof}

For a vector field $X$ on $M$, let $\Phi^X_M$ be its time 1 flow. If $(M,g)$ is complete and $X \in \mathfrak{I}(M,g)$, then $\Phi^X_M \in \I^0(M,g)$.

\begin{lemma} \label{lem:level-set-isos}
    \begin{enumerate}[label=(\alph*)]
        \item The restriction map $\mathcal{V} \to \mathcal{V}_\rho$ is a linear isomorphism.
        \item The restriction map $\mathfrak{g} \to \mathfrak{g}_\rho$ is a Lie algebra isomorphism.
        \item If $G_\rho$ is closed in $\I(\mathcal{S},g_\rho)$, then $G$ is a compact Lie group acting smoothly and isometrically on $(M,g)$,
        and the restriction map $G \to G_\rho$ is a Lie group isomorphism.
    \end{enumerate}
\end{lemma}
\begin{proof}
    Clearly, $\mathcal{V} \to \mathcal{V}_\rho$ defines a surjective linear map. For injectivity, suppose $X,Y \in \mathcal{V}$ agree on $\{f=\rho\}$. Then for any point $x \in \{f=\rho\}$, we have $X(x) = Y(x)$ and $(\nabla X)|_{T_x \{f=\rho\}} = (\nabla Y)|_{T_x \{f=\rho\}}$. At $x$, we also have in the steady case
    \begin{align}
        \nabla_{\nabla f} X - \nabla_{\nabla f} Y = (\Ric(X) - [X,\nabla f]) - (\Ric(Y) - [Y,\nabla f]) = 0,
    \end{align}
    and in the expanding case
    \begin{align}
        \nabla_{\nabla f} X - \nabla_{\nabla f} Y = (\Ric(X) - [X,\nabla f] + \frac{1}{2}X) - (\Ric(Y) - [Y,\nabla f] + \frac{1}{2}Y) = 0.
    \end{align}
    Therefore, $X=Y$ and $\nabla X = \nabla Y$ at $x$. Since $X$ and $Y$ are Killing fields for $(M,g)$, this implies $X=Y$ everywhere. Thus, (a) is proved. (b) is proved the same way.

    If $G_\rho$ is closed in $\I(\mathcal{S},g_\rho)$, then since $\I(\mathcal{S},g_\rho)$ is compact, so is $G_\rho$. Therefore, its exponential map is surjective, i.e. $G_\rho = \Phi^{\mathfrak{g}_\rho}_{\mathcal{S}}$. Below, we use this to deduce that $\Phi^{\mathfrak{g}}_M$ is a subgroup of $\I^0(M,g)$, so $G = \Phi^{\mathfrak{g}}_M$ from the very definition of $G$. Then the restriction map $G \to G_\rho$ coincides with the obvious map $\Phi^{\mathfrak{g}}_M \to \Phi^{\mathfrak{g}_\rho}_{\mathcal{S}}$ which is a group isomorphism.
    We use this to pull back the smooth structure to $G$, and it is easily checked that the usual action $G \times M \to M$ is smooth and isometric. This will prove (c).

    Let $X,Y \in \mathfrak{g}$. Let $X_\rho, Y_\rho \in \mathfrak{g}_\rho$ be their restrictions to $\{f=\rho\}$. Since $G_\rho = \Phi^{\mathfrak{g}_\rho}_{\mathcal{S}}$, there exists $Z_\rho \in \mathfrak{g}_\rho$ such that $\Phi^{X_\rho}_{\mathcal{S}} \circ \Phi^{Y_\rho}_{\mathcal{S}} = \Phi^{Z_\rho}_{\mathcal{S}}$, and from part (b), $Z_\rho$ is the restriction of some $Z \in \mathfrak{g}$. Thus for all $x \in \{f=\rho\}$,
    \begin{align}
        (\Phi^X_M \circ \Phi^Y_M)(x) = (\Phi^{X_\rho}_{\mathcal{S}} \circ \Phi^{Y_\rho}_{\mathcal{S}})(x) = \Phi^{Z_\rho}_{\mathcal{S}}(x) = \Phi^Z_M(x)
    \end{align}
    and by similar reasoning, $D_x(\Phi^X_M \circ \Phi^Y_M)|_{T_x\{f=\rho\}} = D_x \Phi^Z_M|_{T_x\{f=\rho\}}$. As before we use Corollary \ref{cor:g-jacobi} to get $D_x(\Phi^X_M \circ \Phi^Y_M)(\nabla f) = D_x(\Phi^Z_M)(\nabla f) = \nabla f$. Hence $\Phi^X_M \circ \Phi^Y_M = \Phi^Z_M$ everywhere on $M$. So $\Phi^{\mathfrak{g}}_M$ is closed under composition. The other group properties are easy to check.
\end{proof}

Next, we will use the assumption that $(\mathcal{S},\bar{g}_\infty)$ is homogeneous. This implies that its space of Killing fields $\mathfrak{I}(\mathcal{S},\bar{g}_\infty)$ spans the tangent space at each point. We define the following quantities:
\begin{definition} \label{defn:d1d2}
    Define $d_1(\mathcal{S},\bar{g}_\infty) \geq 0$ to be the largest integer $d$ such that
    \begin{itemize}
        \item Any Lie subalgebra of $\mathfrak{I}(\mathcal{S},\bar{g}_\infty)$ with dimension $\geq \bar{d} - d$ spans the tangent space to $\mathcal{S}$ at every point.
    \end{itemize}
    Given $d' \geq 0$, we also define $d_2(\mathcal{S},d') \geq 0$ to be the largest integer $d$ such that
    \begin{itemize}
        \item If $g$ is a homogeneous metric on $\mathcal{S}$ with $\dim \I(\mathcal{S},g) \geq \bar{d} - d'$, then any Lie subalgebra of $\mathfrak{I}(\mathcal{S},g)$ with dimension $\geq \bar{d} - d$ generates a \emph{closed} Lie subgroup of $\I(\mathcal{S},g)$.
    \end{itemize}
\end{definition}

\begin{lemma} \label{lem:exact-KF-subalgebra}
    If $\dim\ker_{L^2(e^f)}(L) \leq d_1(\mathcal{S},\bar{g}_\infty)$, then $\mathfrak{g}_\rho$ spans the tangent space to $\{f=\rho\}$ at every point for all sufficiently large $\rho$.
\end{lemma}
\begin{proof}
    We prove the result for asymptotically $\mathcal{S}$-cylindrical steady GRS. For asymptotically $(\mathcal{S},\bar{g}_\infty)$-conical expanding GRS, we simply substitute Proposition \ref{prop:approx-KF} $\mapsto$ Proposition \ref{prop:approx-KF-EGS}, Proposition \ref{prop:exact-KFs} $\mapsto$ Proposition \ref{prop:exact-KFs-EGS}, and Lemma \ref{lem:g-closeness} $\mapsto$ Lemma \ref{lem:g-closeness-EGS}.

    Proposition \ref{prop:exact-KFs} gives a $(\bar{d}-d_1(\mathcal{S},\bar{g}_\infty))$-dimensional space $\mathcal{V}$ of Killing fields on $(M,g)$. Take a basis $\{W_i\}_{i=1,\ldots,\bar{d} - d_1(\mathcal{S},\bar{g}_\infty)}$ for $\mathcal{V}$, let $\bar{U}_i$ be the corresponding vector fields from Proposition \ref{prop:exact-KFs}(e), and let $U_i$ be arbitrary extensions thereof. Since the map $\mathcal{V} \to \mathfrak{I}(\mathcal{S}, \bar{g}_\infty)$, $W_i \mapsto \bar{U}_i$ is linear and injective, it follows that the $U_i$ are linearly independent.
    
    Write $X_i = W_i - U_i$. From Proposition \ref{prop:approx-KF} and Proposition \ref{prop:exact-KFs}, we have
    \begin{align} \label{eq:WXU-estimates}
        |\nabla^k W_i| \leq \O(r^{\frac{1}{2}-\frac{k}{2}}), \quad |\nabla^k X_i| \leq \O(r^{-\frac{k}{2}}), \quad |\nabla^k U_i| \leq \O(r^{\frac{1}{2}-\frac{k}{2}})
    \end{align}
    for each $k \geq 0$. We compute
    \begin{align}
        X_{ij} &:= [W_i, W_j] - [U_i, U_j] = [X_i, W_j] + [U_i, X_j] = \nabla(X_i \ast W_j) + \nabla(U_i \ast X_j).
    \end{align}
    By \eqref{eq:WXU-estimates}, this yields
    \begin{align} \label{eq:nabla-k-Xij}
        |\nabla^\ell X_{ij}| \leq \O(r^{-\frac{\ell}{2}}) \quad \text{for each } \ell \geq 0.
    \end{align}
    Next, we look at iterated brackets. We have
    \begin{align}
        X_{ijk} &:= [W_i,[W_j,W_k]] - [U_i,[U_j,U_k]] = [U_i+X_i, [U_j,U_k]+X_{jk}] - [U_i,[U_j,U_k]] \\
        &= [U_i,X_{jk}] + [X_i,[U_j,U_k]] + [X_i,X_{jk}] \\
        &= \nabla(U_i \ast X_{jk}) + \nabla(X_i \ast [U_j,U_k]) + \nabla(X_i \ast X_{jk}),
    \end{align}
    which by \eqref{eq:WXU-estimates} and \eqref{eq:nabla-k-Xij}, implies
    \begin{align}
        |\nabla^\ell X_{ijk}| \leq \O(r^{-\frac{\ell}{2}}) \quad \text{for each } \ell \geq 0.
    \end{align}
    Continuing inductively, we see that each difference
    \begin{align}
        X_{i_1\cdots i_p} := [W_{i_1},[W_{i_2},[\ldots,W_{i_p}]]] - [U_{i_1},[U_{i_2},[\ldots,U_{i_p}]]]
    \end{align}
    satisfies
    \begin{align} \label{eq:1058}
        |\nabla^\ell X_{i_1\cdots i_p}| \leq \O(r^{-\frac{\ell}{2}}) \quad \text{for each } \ell \geq 0.
    \end{align}
    From Corollary \ref{cor:g-jacobi}, we know that $X_{i_1\cdots i_p}$ is tangential to the level sets of $f$. Then by \eqref{eq:1058} and Lemma \ref{lem:g-closeness},
    \begin{align} \label{eq:diff-est}
        \sup_{\{f=\rho\}} |X_{i_1\cdots i_p}|_{\bar{g}_\infty} \leq \O(\rho^{-\frac{1}{2}}).
    \end{align}
    The vector fields $\{\bar{U}_i\}_{i=1,\ldots,\bar{d}-d_1(\mathcal{S},\bar{g}_\infty)}$ are linearly independent Killing fields on $(\mathcal{S},\bar{g}_\infty)$. Therefore, the Lie algebra they generate is spanned by a set of the form
    \begin{align} \label{eq:Ubar-iterated}
        \{ [\bar{U}_{i_1},[\bar{U}_{i_2},[\ldots,\bar{U}_{i_p}]]] : 1 \leq p \leq P \text{ and } 1 \leq i_1,\ldots,i_p \leq \bar{d} - d_1(\mathcal{S},\bar{g}_\infty) \}
    \end{align}
    with $P < \infty$. By the definition of $d_1(\mathcal{S},\bar{g}_\infty)$ from Definition \ref{defn:d1d2}, the vector fields in \eqref{eq:Ubar-iterated} span the tangent space to $\mathcal{S}$ at each point. Therefore, the corresponding set of extensions
    \begin{align}
        \{ [U_{i_1},[U_{i_2},[\ldots,U_{i_p}]]] : 1 \leq p \leq P \text{ and } 1 \leq i_1,\ldots,i_P \leq \bar{d} - d_1(\mathcal{S},\bar{g}_\infty) \}
    \end{align}
    spans the tangent space to $\{f=\rho\}$ at each point, provided $\rho$ is sufficiently large. By \eqref{eq:diff-est},
    \begin{align}
        \{ [W_{i_1},[W_{i_2},[\ldots,W_{i_p}]]] : 1 \leq p \leq P \text{ and } 1 \leq i_1,\ldots,i_P \leq \bar{d} - d_1(\mathcal{S},\bar{g}_\infty) \}
    \end{align}
    also spans the tangent space to $\{f=\rho\}$ at each point, provided $\rho$ is sufficiently large.
    Therefore, the subalgebra of Killing fields on $(M,g)$ generated by the $W_i$'s spans the tangent space to $\{f=\rho\}$ at each point, provided $\rho$ is sufficiently large.
\end{proof}

Now we show that Theorem \ref{thm:key-symmetry-2} holds whenever
\begin{align} \label{eq:d-upper-bound}
    d \leq \min_{d' \leq d_1(\mathcal{S},\bar{g}_\infty)} \min \left\{ d', d_2(\mathcal{S},d') \right\}.
\end{align}
\begin{proof}[Proof of Theorem \ref{thm:key-symmetry-2}]
    Let $(M^n,g,f)$ be as in the theorem, and suppose $(\mathcal{S},\bar{g}_\infty)$ is homogeneous. Also assume that $\dim\ker_{L^2(e^f)}(L) \leq d$, where $d$ satisfies \eqref{eq:d-upper-bound}.
    
    Since $d \leq d_1(\mathcal{S},\bar{g}_\infty)$, Lemma \ref{lem:exact-KF-subalgebra} implies that the subalgebra $\mathfrak{g}_\rho$ of $\mathfrak{I}(\mathcal{S},g_\rho)$ spans the tangent space to $\mathcal{S}$ at every point in any sufficiently far level set of $f$. Hence, $G_\rho$ acts transitively on $(\mathcal{S},g_\rho)$, and $\dim \I(\mathcal{S},g_\rho) \geq \dim G_\rho \geq \bar{d} - d \geq \bar{d} - d_1(\mathcal{S},\bar{g}_\infty)$. Since $\mathfrak{g}_\rho$ is a Lie subalgebra of $\mathfrak{I}(\mathcal{S},g_\rho)$ with $\dim \mathfrak{g}_\rho \geq \bar{d} - d \geq \bar{d} - d_2(\mathcal{S},d_1(\mathcal{S},\bar{g}_\infty))$, it follows that $G_\rho$ is a closed Lie subgroup of $\I(\mathcal{S},g_\rho)$. The theorem now follows from Lemma \ref{lem:level-set-isos}.
\end{proof}

Theorem \ref{thm:key-symmetry-2} as stated is insufficient for applications since it does not tell us what $G_\rho$ (and hence $G$) is. Later in the paper, we will use specific choices for $d$ for certain $(\mathcal{S},\bar{g}_\infty)$ and rerun the above proof to find all possibilities for $G$. In combination with the next result, this will give strong control on the global geometry.

\begin{proposition} \label{prop:coho-1-structure}
    Let $(M^n,g,f)$ be an asymptotically $\mathcal{S}$-cylindrical steady GRS or an asymptotically $(\mathcal{S},\bar{g}_\infty)$-conical expanding GRS.
    Suppose $(\mathcal{S},\bar{g}_\infty)$ is homogeneous. Suppose there is a compact, connected Lie group $G$ acting smoothly, almost effectively, and isometrically with cohomogeneity one on $(M,g)$. Also suppose $G$ preserves the level sets of $f$. Denote the projection map by $\pi: M \to M/G$. Then:
    \begin{enumerate}[label=(\alph*)]
        \item The orbit space $M/G$ is homeomorphic to $[0,1)$.
        \item There is a closed subgroup $H$ of $G$ such that the \emph{principal orbits}, i.e. $\pi^{-1}(s)$ for any $s > 0$, are diffeomorphic to $\mathcal{S} \cong G/H$. Thus, each point in a principal orbit has isotropy subgroup $H$.
        \item The \emph{singular orbit} $X = \pi^{-1}(0)$ is diffeomorphic to $G/K$, where $K$ is a closed subgroup of $G$ such that $H \subset K$ and $K/H \cong \Sph^\ell$ for some $\ell \geq 0$.
        \item $M$ is diffeomorphic to the vector bundle $E = G \times_K \R^{\ell+1}$ over $G/K \cong X$, where $K$ acts via $k \cdot (g,v) = (gk^{-1}, kv)$, and $kv$ is the radial extension of the action of $K$ on $K/H \cong \Sph^{\ell}$.
        \item Every sphere bundle of $E$ is diffeomorphic to $\mathcal{S}$. Thus $M \setminus X$ is diffeomorphic to $(0,\infty) \times \mathcal{S}$, and under this identification the metric $g$ becomes
        \begin{align}
            g = ds^2 + g(s),
        \end{align}
        where $\{g(s)\}_{s>0}$ is a smooth family of $G$-invariant metrics on $G/H \cong \mathcal{S}$.
    \end{enumerate}
\end{proposition}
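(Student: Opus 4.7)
The plan is to promote the individual Killing fields from Corollary \ref{cor:kf} into a smooth isometric $G$-action on $(M,g)$, identify principal orbits with level sets of $f$ in the regular region, and then invoke classical cohomogeneity-one theory to deduce the global structure.

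First, I would apply Lemma \ref{lem:isom-action-G}, whose hypothesis is assumption (i), to obtain a smooth faithful isometric action of $\I^0(\mathcal{S}, g_\rho)$ on $(M,g)$. Hypothesis (ii) provides a continuous Lie group homomorphism $G \to \I^0(\mathcal{S}, g_\rho)$ whose image lands in the identity component (as $G$ is connected); composing with the inclusion from Lemma \ref{lem:isom-action-G} yields the desired isometric action of $G$ on $(M,g)$. Statement (b) is then immediate: Corollary \ref{cor:kf}(b) says each $W_i$ is tangent to the level sets of $f$, so its time-1 flow preserves $f$; since the Lie algebra of $\I^0(\mathcal{S}, g_\rho)$ acting on $M$ is spanned by the $W_i$, every element of $G$ preserves $f$.

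Next, I would pin down the orbit structure in the regular region $\{\nabla f \ne 0\}$. Because $G$ preserves both $g$ and $f$, it preserves $\nabla f$, and therefore commutes with the flow of $\nabla f / |\nabla f|^2$. Hypothesis (ii) and Corollary \ref{cor:area-noncollapsed} ensure that $G$ acts transitively on $\{f = \rho\} \cong \mathcal{S}$ with isotropy $H$; the $G$-equivariance of this gradient flow transports transitivity and isotropy to every level set in the regular region. This proves (c) and shows that the $G$-action on $M$ is of cohomogeneity one with principal orbit $G/H \cong \mathcal{S}$.

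The remaining assertions follow from standard cohomogeneity-one theory once the singular set $X := \{\nabla f = 0\}$ is analyzed, which I expect to be the main obstacle. Since $f$ is proper and attains its minimum (Lemma \ref{lem:f-r-comparable}), $X$ is nonempty, compact, and $G$-invariant. For any $p_0 \in X$, the orbit $G \cdot p_0$ must have dimension less than $n-1$: otherwise it would be a principal orbit near which $f$ would take values both above and below $f(p_0)$, contradicting $p_0$ being critical. Hence $G \cdot p_0 \cong G/K$ for some closed subgroup $K \supset H$. The one-endedness of $M$ (Theorem \ref{thm:conn-at-infty}) rules out a second singular orbit, as that would close $M/G$ into a compact interval and force $M$ to be closed. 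Thus $M/G \cong [0,1)$ with $X = \pi^{-1}(0) = G \cdot p_0$, establishing (a) and (d). The slice theorem at $X$ yields an orthogonal $K$-representation on $\R^{\ell+1}$ whose unit sphere is $K/H \cong \Sph^\ell$; the equivariant tubular neighborhood theorem together with the cohomogeneity-one rigidity between adjacent principal orbits then gives the global diffeomorphism $M \cong G \times_K \R^{\ell+1}$, proving (e) and (f). The warped-product form $g = ds^2 + g(s)$ on $M \setminus X$ arises by parameterizing $\nabla f$-orthogonal geodesics emanating from $X$ by arclength.
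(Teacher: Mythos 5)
Your overall route is the paper's: use Lemma \ref{lem:isom-action-G} (via hypothesis (i)) and the homomorphism $G \to \I^0(\mathcal{S},g_\rho)$ from hypothesis (ii) to get an isometric $G$-action on $(M,g)$ preserving $f$, observe that $\{f=\rho\}$ is a codimension-one orbit with isotropy $H$, and then quote the standard structure theory of cohomogeneity-one manifolds for (c)--(f). The difference is in how you pin down the orbit space: the paper invokes Mostert's theorem, which says that the existence of one codimension-one orbit already forces $M/G$ to be homeomorphic to $\Sph^1$, $[0,1]$, $[0,1)$, or $(0,1)$, and then eliminates the first two by noncompactness and the last by Theorem \ref{thm:conn-at-infty} (one end). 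You instead try to analyze the critical set $X=\{\nabla f=0\}$ by hand, and this is where there is a genuine gap.

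The flawed step is your claim that for $p_0\in X$ the orbit $G\cdot p_0$ cannot have dimension $n-1$ because ``$f$ would take values both above and below $f(p_0)$, contradicting $p_0$ being critical.'' Neither half of this is right: a critical point of $f$ is perfectly compatible with $f$ taking values on both sides nearby (a saddle), and, more to the point, if $p_0$ is the global minimum of $f$ then $f$ only takes values $\geq f(p_0)$, so the premise itself fails. The scenario you must exclude is precisely that the minimum set of $f$ is a codimension-one orbit with $f$ increasing on both sides of it --- equivalently $M/G\cong(0,1)$ and $M\cong(0,1)\times\mathcal{S}$ --- and criticality of $f$ gives no contradiction there. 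The correct obstruction is that this would give $M$ two ends, contradicting Theorem \ref{thm:conn-at-infty}; you have this tool in hand but deploy it only against the two-singular-orbit case $M/G\cong[0,1]$, where plain noncompactness of $M$ already suffices. A secondary issue in the same vein: you implicitly assume $\{\nabla f=0\}$ is a single orbit and equals $\pi^{-1}(0)$, and that every regular point is joined to $\{f=\rho\}$ by a gradient flow line; none of this is automatic without first knowing the orbit space is an interval with one boundary point. All of these difficulties evaporate if you replace the hands-on analysis of the critical set by the citation of Mostert's classification followed by the one-endedness argument, as in the paper.
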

\begin{proof}
    By Mostert \cite{mostert}, the existence of a cohomogeneity one action of a compact Lie group implies that the orbit space $M/G$ is homeomorphic to one of
    \begin{align}
        \Sph^1, \quad [0,1], \quad [0,1), \quad (0,1).
    \end{align}
    As $M$ is noncompact, we can rule out $\Sph^1$ and $[0,1]$. We can also rule out $(0,1)$, as by \cite{mostert} this would imply $M$ is diffeomorphic to $(0,1) \times \mathcal{S}$, contradicting Theorem \ref{thm:conn-at-infty} in the steady case or the asymptotically $(\mathcal{S},\bar{g}_\infty)$-conical hypothesis in the expanding case. Hence, $M/G$ is homeomorphic to $[0,1)$, proving (a). The other assertions are well-known facts about manifolds admitting a cohomogeneity one isometric action with orbit space $[0,1)$; see e.g. \cite{alexandrino-bettiol}*{\S 6.3} and \cite{hoelscher}*{\S 1}.
\end{proof}

\section{Uniqueness of rotationally symmetric GRSs} \label{sec:rot-symm}

We now specialize the symmetry principles from \S\ref{subsec:integrate-KFs} to the case of round spherical asymptotic links, and prove Theorem \ref{thm:brendle-higherdim-gen}. In this section, $\bar{g}_\infty$ denotes the round metric on $\Sph^{n-1}$ of constant sectional curvature $1$. Following Theorem \ref{thm:brendle-higherdim-gen}, we take $(M^n,g,f)$, $n \geq 3$, to be either an asymptotically $\Sph^{n-1}$-cylindrical steady GRS or an asymptotically $(\Sph^{n-1},\alpha\bar{g}_\infty)$-conical expanding GRS with $\alpha > 0$. We start with a well-known fact (see for instance \cite{kobayashi-groups}*{\S II.3}):

\begin{lemma} \label{lem:prop-subalg-so}
    The maximal dimension of a proper Lie subalgebra of $\mathfrak{so}(n)$ is $\frac{(n-1)(n-2)}{2}$ if $n=3$ or $n \geq 5$, and $4$ if $n=4$.
\end{lemma}

Recall the definition of $d_1$ from Definition \ref{defn:d1d2}.
\begin{corollary} \label{cor:sphere-d1d2}
    We have
    \begin{align} \label{eq:d1-sphere}
        d_1(\Sph^{n-1}, \bar{g}_\infty) \geq \begin{cases}
            n-2 & \text{if } n=3 \text{ or } n \geq 5, \\
            1 & \text{if } n=4.
        \end{cases}
    \end{align}
\end{corollary}
\begin{proof}
    Let $d$ be the quantity on the RHS of \eqref{eq:d1-sphere}. We have
    \begin{align}
        \bar{d} - d = \begin{cases}
            \frac{n(n-1)}{2} - (n-2) = \frac{(n-1)(n-2)}{2} + 1 & \text{if } n=3 \text{ or } n \geq 5, \\
            5 &\text{if } n=4.
        \end{cases}
    \end{align}
    By Lemma \ref{lem:prop-subalg-so}, any Lie subalgebra of $\mathfrak{I}(\Sph^{n-1},\bar{g}_\infty) = \mathfrak{so}(n)$ with dimension $\geq \bar{d} - d$ must be $\mathfrak{so}(n)$ itself, so its vector fields span the tangent space to $\Sph^{n-1}$ at each point. Hence, $d_1(\Sph^{n-1}, \bar{g}_\infty) \geq d$ by definition.
\end{proof}

\begin{corollary} \label{cor:round-coho-1}
    Suppose that
    \begin{align}
        \dim\ker_{L^2(e^f)}(L) \leq 
        \begin{cases}
            n-2 & \text{if } n=3 \text{ or } n \geq 5, \\
            1 & \text{if } n=4.
        \end{cases}
    \end{align}
    Then $SO(n)$ acts smoothly, effectively, and isometrically with cohomogeneity one on $(M,g)$. The action preserves the level sets of $f$. The isotropy subgroup of each point in a sufficiently far level set of $f$ is $SO(n-1)$.
\end{corollary}
\begin{proof}
    The isometry group of the round sphere has dimension $\bar{d} = \frac{n(n-1)}{2}$. Let $d = n-2$ if $n=3$ or $n \geq 5$, or $d=1$ if $n=4$. Then $\dim\ker_{L^2(e^f)}(L) \leq d$ by assumption. We rerun the proof of Theorem \ref{thm:key-symmetry-2}:

    We have $d \leq d_1(\Sph^{n-1},\bar{g}_\infty)$ by Corollary \ref{cor:sphere-d1d2}. Hence, Lemma \ref{lem:exact-KF-subalgebra} implies that the subalgebra $\mathfrak{g}_\rho$ of $\mathfrak{I}(\Sph^{n-1},g_\rho)$ spans the tangent space to $\Sph^{n-1}$ at every point in any sufficiently far level set of $f$. So $(\Sph^{n-1}, g_\rho)$ is homogeneous with
    \begin{align}
        \dim \I(\Sph^{n-1}, g_\rho) \geq \bar{d}-d = \begin{cases}
            \frac{(n-1)(n-2)}{2} + 1 & \text{if } n=3 \text{ or } n \geq 5, \\
            5 & \text{if } n=4.
        \end{cases}
    \end{align}
    By Ziller's classification of homogeneous metrics on spheres \cite{ziller-homog}, this is only possible if $g_\rho$ is round and $\I^0(\Sph^{n-1},g_\rho) = SO(n)$; moreover the isotropy subgroup of any point is $SO(n-1)$. Since $\mathfrak{g}_\rho$ is a Lie subalgebra of $\mathfrak{I}(\Sph^{n-1},g_\rho) = \mathfrak{so}(n)$ with $\dim\mathfrak{g}_\rho \geq \bar{d}-d$, this forces $\mathfrak{g}_\rho = \mathfrak{so}(n)$ by Lemma \ref{lem:prop-subalg-so}. Hence $G_\rho \cong SO(n)$. By Lemma \ref{lem:level-set-isos}, $G \cong G_\rho$ acts as claimed.
\end{proof}

\begin{lemma} \label{lem:S-round-unique}
    Every $SO(n)$-invariant metric on $SO(n)/SO(n-1) \cong \Sph^{n-1}$ is a multiple of $\bar{g}_\infty$.
\end{lemma}
\begin{proof}
    Such metrics correspond to $\mathrm{Ad}(SO(n-1))$-invariant inner products on $\mathfrak{p}$, where $\mathfrak{so}(n) = \mathfrak{so}(n-1) \oplus \mathfrak{p}$ is a fixed $\mathrm{Ad}(SO(n-1))$-invariant decomposition. But $\mathfrak{p}$ is irreducible, so the result follows from Schur's lemma.
\end{proof}

\begin{proof}[Proof of Theorem \ref{thm:brendle-higherdim-gen}]
    By Corollary \ref{cor:round-coho-1} and Proposition \ref{prop:coho-1-structure}, there is a submanifold $X \subset M$ not homeomorphic to $\Sph^{n-1}$, such that $(M \setminus X, g)$ can be identified as
    \begin{align}
        (M \setminus X, g) = \left( (0,\infty) \times \Sph^{n-1}, ds^2 + g(s) \right),
    \end{align}
    where $\{g(s)\}_{s>0}$ is a family of $SO(n)$-invariant metrics on a fixed copy of $SO(n)/SO(n-1) \cong \Sph^{n-1}$. From the description of $(M,g)$ as a bundle over $X$, the metric spaces $(\Sph^{n-1},g(s))$ converge in the Hausdorff sense as $s \to 0$ to $X$ with the submanifold metric. Moreover, $f$ is a function of $s$.
    
    By Lemma \ref{lem:S-round-unique}, we have
    \begin{align} \label{eq:0186}
        g(s) = w(s)^2 \bar{g}_{\infty} \quad \text{for all } s > 0,
    \end{align}
    where $w$ is a smooth positive function. Therefore, in view of the Hausdorff convergence $(\Sph^{n-1}, g(s)) \to X$ and the fact that $X$ is not homeomorphic to $\Sph^{n-1}$, we must have $w(s) \to 0$ as $s \to 0^+$ and $X$ is a point. By Proposition \ref{prop:coho-1-structure}(d), $M$ is diffeomorphic to the cone over $\Sph^{n-1}$, i.e. $\R^n$. It is also rotationally symmetric by \eqref{eq:0186}.
\end{proof}

\section{Global symmetries of GRSs with Berger-type asymptotic links} \label{sec:berger-symm}

In this section, we specialize the symmetry principles from \S\ref{subsec:integrate-KFs} to the case of Berger-type asymptotic links, and prove Theorem \ref{thm:appleton-uniqueness}. The basic idea is similar to that of the previous section, but more group-theoretic input is required in this case.

\subsection{Isometries of Berger metrics}

Recall some notations from \S\ref{subsec:uniq-appl}:
\begin{itemize}
    \item For each $m,k \geq 1$, $L_{m,k}$ is the smooth manifold obtained by quotienting $\Sph^{2m+1}$ by the $\Z_k$ action which rotates the Hopf fibers.
    \item For $a,b > 0$, the metric $g_{a,b}$ on $L_{m,k}$ is obtained by quotienting the Berger metric
    \begin{align}
        \tilde{g}_{a,b} = a^2 \sigma \otimes \sigma + b^2 \pi^*g_{\mathrm{FS}}
    \end{align}
    on $\Sph^{2m+1}$ by the $\Z_k$ action. The metric $g_{a,b}$ has constant sectional curvature $\kappa^{-2}$ if and only if $a=b=\kappa$.
\end{itemize}
The next two lemmas lay out well-known facts about the isometry groups of $\tilde{g}_{a,b}$ and $g_{a,b}$, as well as their homogeneous presentations.

\begin{lemma} \label{lem:homog-presentations}
    Let $m,k \geq 1$ and $a,b > 0$.
    \begin{enumerate}[label=(\alph*)]
        \item $U(m+1)$ embeds into $\I^0(\Sph^{2m+1},\tilde{g}_{a,b})$ as a transitive subgroup of isometries, with isotropy subgroup $U(m)$ (embedded in $U(m+1)$ as $A \mapsto \mathrm{diag}(A,1)$).
        \item $SU(m+1)$ embeds into $\I^0(\Sph^{2m+1},\tilde{g}_{a,b})$ as a transitive subgroup of isometries, with isotropy subgroup $SU(m)$ (embedded in $SU(m+1)$ as $A \mapsto \mathrm{diag}(A,1)$).
        \item $(L_{m,k}, g_{a,b})$ is the quotient of $(\Sph^{2m+1}, \tilde{g}_{a,b})$ by a central subgroup $\Z_k \subset U(1) \cong Z(U(m+1))$.
        \item $U(m+1)$ acts transitively and isometrically on $(L_{m,k}, g_{a,b})$ with isotropy subgroup $U(m) \times \Z_k$ (block-diagonally embedded in $U(m+1)$).
        \item $SU(m+1)$ acts transitively and isometrically on $(L_{m,k}, g_{a,b})$ with isotropy subgroup $S(U(m) \times \Z_k)$ (block-diagonally embedded in $SU(m+1)$).
    \end{enumerate}
\end{lemma}
\begin{proof}
    Parts (a), (b), and (c) are standard. Part (d) follows from (a) and (c) because the subgroup of $U(m+1)$ generated by the upper-left $U(m)$ and the central $\Z_k$ is the indicated copy of $U(m) \times \Z_k$. Part (e) follows from parts (b) and (d).
\end{proof}

\begin{lemma} \label{lem:berger-isoms}
    Let $m \geq 1$ and $a,b > 0$. Then
    \begin{align} \label{eq:isom-gtildeab}
        \I(\Sph^{2m+1}, \tilde{g}_{a,b}) &\cong \begin{cases}
            O(2m+2) & \text{if } a = b, \\
            U(m+1) \rtimes \Z_2 & \text{if } a \neq b.
        \end{cases}
    \end{align}
    If additionally $k \geq 3$, then for any $a,b > 0$ we have
    \begin{align} \label{eq:ab}
        \I(L_{m,k}, g_{a,b}) &\cong (U(m+1) \rtimes \Z_2)/\Z_k.
    \end{align}
\end{lemma}
\begin{proof}
    If $a=b$, then $(\Sph^{2m+1},\tilde{g}_{a,b})$ is a round sphere with isometry group $O(2m+2)$.
    If $a \neq b$, it is well-known (e.g. \cite{torralbo-urbano}*{\S 2}) that $\I(\Sph^{2m+1},\tilde{g}_{a,b}) = \hat{\I}$ where
    \begin{align} \label{eq:isom-gab}
        \hat{\I} = \{ A \in O(2m+2) : AJ = \pm JA \},
    \end{align}
    and $J \in O(2m+2)$ is the matrix associated with multiplication by $i$ in $\C^{m+1} \cong \R^{2m+2}$. One checks that $\hat{\I}$ is the disjoint union
    \begin{align} \label{eq:isom-gab2}
        \hat{\I} = U(m+1) \sqcup KU(m+1),
    \end{align}
    where $U(m+1)$ is considered as embedded in $O(2m+2)$, and $K \in O(2m+2)$ is the matrix which sends $(z_0,\ldots,z_m) \in \C^{m+1} \cong \R^{2m+2}$ to $(\bar{z}_0,\ldots,\bar{z}_m)$. Then
    \begin{align} \label{eq:isom-gab3}
        \hat{\I} \cong U(m+1) \rtimes \Z_2.
    \end{align}
    This establishes \eqref{eq:isom-gtildeab}.

    By Lemma \ref{lem:homog-presentations}(c), we have $\I(L_{m,k}, g_{a,b}) \cong N_{\I(\Sph^{2m+1},\tilde{g}_{a,b})}(\Z_k)/\Z_k$, where $\Z_k$ sits centrally in $U(m+1) \subset \I^0(\Sph^{2m+1},\tilde{g}_{a,b})$. Thus, to prove \eqref{eq:ab} we must show that $N_{\I(\Sph^{2m+1},\tilde{g}_{a,b})}(\Z_k) \cong U(m+1) \rtimes \Z_2$. We again divide into two cases.

    Suppose $a \neq b$. With $K \in O(2m+2)$ as above, we see that $KAK^{-1} = \bar{A}$ for all $A \in U(m+1) \subset O(2m+2)$. In particular, $K$ normalizes the indicated central copy of $\Z_k$. Using \eqref{eq:isom-gab2} and \eqref{eq:isom-gab3}, it follows that
    \begin{align}
        N_{\I(\Sph^{2m+1},\tilde{g}_{a,b})}(\Z_k) &= U(m+1) \sqcup KU(m+1) = \hat{\I}\cong U(m+1) \rtimes \Z_2,
    \end{align}
    as required.

    On the other hand, suppose $a=b$, so that $\I(\Sph^{2m+1},\tilde{g}_{a,b}) = O(2m+2)$. The matrix $Z = \exp(\frac{2\pi}{k}J)$ is a generator of the indicated copy of $\Z_k$. Thus,
    \begin{align} \label{eq:06018}
        Z^{\pm 1} = \cos\left( \frac{2\pi}{k} \right) I \pm \sin \left( \frac{2\pi}{k} \right) J,
    \end{align}
    where $I$ is the identity matrix of size $2m+2$. If $A \in N_{O(2m+2)}(\Z_k)$, then $AZA^{-1} = Z^\alpha$ for some $\alpha \in \Z$, i.e.
    \begin{align} \label{eq:AZA}
        A \exp\left( \frac{2\pi}{k} J \right) A^{-1} = \exp\left( \frac{2\pi \alpha}{k} J \right).
    \end{align}
    The left matrix has eigenvalues $e^{\pm \frac{2\pi i}{k}}$, each with multiplicity $m+1$. The right matrix has eigenvalues $e^{\pm\frac{2\pi i \alpha}{k}}$, each with multiplicity $m+1$. For these eigenvalues to match, we need $\alpha \equiv \pm 1$ mod $k$. Then $AZA^{-1} = Z^{\pm 1}$. Writing these two equations out in terms of \eqref{eq:06018}, then simplifying, we get
    \begin{align}
        \sin\left(\frac{2\pi}{k}\right)(AJ - JA) = 0 \quad \text{or} \quad \sin\left(\frac{2\pi}{k}\right)(AJ + JA) = 0.
    \end{align}
    Since $k \geq 3$, we have $\sin(\frac{2\pi}{k}) \neq 0$, so $AJ = \pm JA$ and $A \in \hat{\I}$. Thus, $N_{O(2m+2)}(\Z_k) \subseteq \hat{\I} \cong U(m+1) \rtimes \Z_2$. Reversing these arguments gives $\hat{\I} \subseteq N_{O(2m+2)}(\Z_k)$.
\end{proof}

\subsection{Reduction to cohomogeneity one}

The main result of this subsection is Corollary \ref{cor:coho-1-berger}, which is an analog of Corollary \ref{cor:round-coho-1} from the previous section. We need the following preparations.

\begin{lemma} \label{lem:prop-subalg-u}
    Let $m \geq 1$.
    If $\mathfrak{g}$ is a Lie subalgebra of $\mathfrak{u}(m+1)$ satisfying
    \begin{align} \label{eq:560583}
        \dim\mathfrak{g} \geq (m+1)^2 - 2(m+1) + 3,
    \end{align}
    Then either $\mathfrak{g} = \mathfrak{su}(m+1)$ or $\mathfrak{g} = \mathfrak{u}(m+1)$.
    If $m=3$ we also allow $\mathfrak{g} = \mathfrak{sp}(2) \oplus \mathfrak{u}(1)$ (up to conjugacy in $\mathfrak{u}(4)$).
\end{lemma}
\begin{proof}
    Consider the canonical representation of $\mathfrak{g}$ on $\C^{m+1}$. If this were reducible, then $\mathfrak{g}$ is contained in $\mathfrak{u}(k) \oplus \mathfrak{u}(m+1-k)$ for some $1 \leq k \leq m$. Thus, $\dim\mathfrak{g} \leq k^2 + (m+1-k)^2$ for some $1 \leq k \leq m$. However, this is easily checked to contradict \eqref{eq:560583}.
    Therefore, $\mathfrak{g}$ acts irreducibly on $\C^{m+1}$.

    Recall that $\mathfrak{u}(m+1) = \mathfrak{su}(m+1) \oplus \mathfrak{u}(1)$, where $\mathfrak{u}(1) = \R iI_{m+1}$. Thus,
    \begin{align} \label{eq:4408}
        \mathfrak{g} = \mathfrak{g}^{\mathrm{tf}} \oplus \mathfrak{z} := (\mathfrak{g} \cap \mathfrak{su}(m+1)) \oplus (\mathfrak{g} \cap \mathfrak{u}(1)).
    \end{align}
    Since $\dim\mathfrak{z} \leq 1$, we have $\dim \mathfrak{g}^{\mathrm{tf}} \geq \dim\mathfrak{g} - 1$. Moreover, since $\mathfrak{g}^{\mathrm{tf}}$ is a subalgebra of $\mathfrak{su}(m+1)$, it is compact and hence reductive. Also $\mathfrak{g}^{\mathrm{tf}}$ acts irreducibly on $\C^{m+1}$, since $\mathfrak{g}$ does.
    Hence, its complexification $\mathfrak{g}^{\mathrm{tf}}_{\C} \subset \mathfrak{sl}(m+1,\C)$ is also reductive, acts irreducibly on $\C^{m+1}$, and satisfies
    \begin{align} \label{eq:C-dim-irrep}
        \dim_{\C} \mathfrak{g}^{\mathrm{tf}}_{\C} \geq (m+1)^2 - 2(m+1) + 2.
    \end{align}
    By a dimensional argument, one checks that the canonical representation of $\mathfrak{g}^{\mathrm{tf}}_{\C}$ on $\C^{m+1}$ does not decompose as a tensor product.
    Combined with its reductivity, this implies $\mathfrak{g}^{\mathrm{tf}}_{\C}$ is a simple Lie algebra over $\C$ possessing an faithful $\C$-irrep of dimension $m+1$.
    Also being a subalgebra of $\mathfrak{sl}(m+1,\C)$ and satisfying \eqref{eq:C-dim-irrep}, one checks using the classification of simple Lie algebras and their smallest $\C$-irreps that the only possibilities are $\mathfrak{g}^{\mathrm{tf}}_{\C} = \mathfrak{sl}(m+1,\C)$, and if $m=3$, $\mathfrak{g}^{\mathrm{tf}}_{\C} = \mathfrak{sp}(2,\C)$. We divide into these two cases.

    If $\mathfrak{g}^{\mathrm{tf}}_{\C} = \mathfrak{sl}(m+1,\C)$, then $\mathfrak{g}^{\mathrm{tf}}$ is a compact real form of $\mathfrak{sl}(m+1,\C)$,
    which is necessarily $\mathfrak{su}(m+1)$. Using \eqref{eq:4408}, we see that $\mathfrak{g} = \mathfrak{su}(m+1)$ if $\dim\mathfrak{z} = 0$, or $\mathfrak{g} = \mathfrak{u}(m+1)$ if $\dim\mathfrak{z} = 1$.

    If $m=3$ and $\mathfrak{g}^{\mathrm{tf}}_{\C} = \mathfrak{sp}(2,\C)$, then $\mathfrak{g}^{\mathrm{tf}}$ is a compact real form of $\mathfrak{sp}(2,\C)$, which is necessarily $\mathfrak{sp}(2)$. Using \eqref{eq:4408}, we see that $\mathfrak{g} = \mathfrak{sp}(2)$ if $\dim \mathfrak{z} = 0$, or $\mathfrak{g} = \mathfrak{sp}(2) \oplus \mathfrak{u}(1)$ if $\dim \mathfrak{z} = 1$. We can rule out the former case since this violates \eqref{eq:560583}.
\end{proof}

For the next proposition, recall the definition of $d_1$ from Definition \ref{defn:d1d2}.

\begin{proposition} \label{prop:d1-berger}
    Let $m \geq 1$, $k \geq 3$, and $a,b > 0$. Then
    \begin{align} \label{eq:01608}
        d_1(L_{m,k}, g_{a,b}) &\geq 2(m+1)-3.
    \end{align}
\end{proposition}
\begin{proof}
    Let $\mathfrak{g}$ be a Lie subalgebra of $\mathfrak{I}(L_{m,k}, g_{a,b}) = \mathfrak{u}(m+1)$ with dimension $\geq \bar{d} - 2(m+1)+3 = (m+1)^2 - 2(m+1)+3$. By Lemma \ref{lem:prop-subalg-u}, either $\mathfrak{g} = \mathfrak{su}(m+1)$ or $\mathfrak{g} = \mathfrak{u}(m+1)$, or when $m=3$ we may have $\mathfrak{g} = \mathfrak{sp}(2) \oplus \mathfrak{u}(1)$. Therefore, the unique connected Lie subgroup $G$ of $\I^0(L_{m,k}, g_{a,b}) = U(m+1)/\Z_k$ with Lie algebra $\mathfrak{g}$ is, up to conjugacy, the image of $\tilde{G} \in \{SU(m+1), U(m+1), Sp(2) \cdot U(1)\}$ under the projection map $U(m+1) \to U(m+1)/\Z_k$. Note that each possible $\tilde{G}$ acts transitively on the unit sphere in $\C^{m+1}$, so $G$ acts transitively on $L_{m,k}$. Thus, $\mathfrak{g}$ spans the tangent space to $L_{m,k}$ at each point.
    This implies that $d_1(L_{m,k}, g_{a,b}) \geq 2(m+1)-3$.
\end{proof}

\begin{proposition} \label{prop:homog-metrics-Lmk}
    Let $m \geq 1$ and $k \geq 3$. If $g$ is a homogeneous metric on $L_{m,k}$ with
    \begin{align} \label{eq:assumed-lb}
        \dim \I(L_{m,k},g) \geq \begin{cases}
            (m+1)^2 - 2(m+1) + 3 & \text{if } m=2 \text{ or } m \geq 4, \\
            14 & \text{if } m=3, \\
            4 & \text{if } m=1,
        \end{cases}
    \end{align}
    then $g = g_{a,b}$ for some $a,b > 0$. Thus, $\I^0(L_{m,k},g) = U(m+1)/\Z_k$, and the isotropy subgroup of its action is the image of $U(m) \times \Z_k$ under the quotient map $U(m+1) \to U(m+1)/\Z_k$.
\end{proposition}
\begin{proof}
    Let $(\Sph^{2m+1}, \tilde{g})$ be the universal cover of $(L_{m,k}, g)$. Then $\dim \I(\Sph^{2m+1}, \tilde{g})$ has the same lower bound as \eqref{eq:assumed-lb}. Meanwhile, Ziller \cite{ziller-homog} classifies all homogeneous metrics on spheres and their isometry groups. Using this classification and the dimension bound on $\I(\Sph^{2m+1}, \tilde{g})$, all non-Berger cases are ruled out so $\tilde{g}$ must be a Berger metric, i.e. $\tilde{g} = \tilde{g}_{a,b}$ for some $a,b > 0$.
    
    If $\tilde{g}$ has constant sectional curvature, then since diffeomorphic spherical space forms are isometric \cite{derham}, $(L_{m,k},g)$ is isometric to $(L_{m,k}, g_{a,a})$ for some $a>0$. The remaining claims follow from Lemma \ref{lem:berger-isoms}.

    Suppose now that $\tilde{g}$ has non-constant sectional curvature. By construction, $(L_{m,k},g)$ is the quotient of $(\Sph^{2m+1},\tilde{g})$ by a free, isometric action of $\Z_k$. By \eqref{eq:assumed-lb}, there is an embedding $\iota: \Z_k \to G := \I(\Sph^{2m+1},\tilde{g}) = U(m+1) \rtimes \Z_2$ with the same dimension lower bound on $H := N_G^0(\iota(\Z_k))$. Let $F := \iota(\Z_k)$. Consider the conjugation map $H \to \mathrm{Aut}(F)$. This has connected image in a finite group, so it is trivial and hence
    \begin{align} \label{eq:F-inclusion}
        F \subset C_G(H).
    \end{align}
    
    Now, $H$ is a closed connected subgroup of $G$ with dimension at least the right-hand side of \eqref{eq:assumed-lb}.
    By Lemma \ref{lem:prop-subalg-u}, its Lie algebra is $\mathfrak{su}(m+1)$ or $\mathfrak{u}(m+1)$, with the former only possible when $m>1$ due to the dimension bound.
    Accordingly, $H = SU(m+1)$ or $H = U(m+1)$. Below, we show that in both cases $F = \iota(\Z_k)$ is contained in the center of $\I^0(\Sph^{2m+1},\tilde{g}) = U(m+1)$. By Lemma \ref{lem:homog-presentations}(c), this implies that $g = g_{a,b}$. The remaining claims of the proposition follow from Lemma \ref{lem:berger-isoms}.

    \textbf{Case 1: $H = U(m+1)$.} Suppose $g \in KU(m+1)$ centralizes $H$. Then it commutes with all elements of the form $z I_{m+1}$, $z \in U(1)$. Writing $g = KA$ for $A \in U(m+1)$, we have
    \begin{align}
        zg = (z I_{m+1}) g = g(z I_{m+1}) = KAz = KzA = \bar{z} KA = \bar{z}g.
    \end{align}
    for all $z \in U(1)$. This is a contradiction, so no element in $KU(m+1)$ centralizes $H$. Thus,
    \begin{align}
        F \subset C_G(H) = C_{U(m+1) \sqcup KU(m+1)}(H) = Z(U(m+1)).
    \end{align}

    \textbf{Case 2: $H = SU(m+1)$ (only when $m > 1$).} It is well known that
    \begin{align} \label{eq:0169868}
        C_{U(m+1)}(H) = Z(U(m+1)) \cong U(1).
    \end{align}
    Meanwhile, let $g \in KU(m+1)$, and write $g=KA$ with $A \in U(m+1)$. For all $S \in H$, we have
    \begin{align}
        gS = Sg \quad \Leftrightarrow \quad KAS = S(KA) = K\bar{S}A \quad \Leftrightarrow \quad ASA^{-1} = \bar{S}.
    \end{align}
    That is, $g=KA$ centralizes $H$ iff $ASA^{-1} = \bar{S}$ for all $S \in H$. This can only happen if $S \mapsto \bar{S}$ is an inner automorphism of $SU(m+1)$. However, since $m > 1$, complex conjugation in $SU(m+1)$ is not inner. Therefore, no element of $KU(m+1)$ centralizes $H$. Together with \eqref{eq:F-inclusion} and \eqref{eq:0169868}, we conclude that $F \subset C_G(H) = Z(U(m+1))$, as claimed.
\end{proof}

\begin{corollary} \label{cor:coho-1-berger}
    Let $(M,g,f)$ be an asymptotically $L_{m,k}$-cylindrical steady GRS or an asymptotically $(L_{m,k}, g_{\alpha,\beta})$-conical expanding GRS, where $m \geq 1$, $k \geq 3$, and $\alpha,\beta > 0$. Suppose that
    \begin{align} \label{eq:ddd}
        \dim\ker_{L^2(e^f)}(L) \leq \begin{cases}
            2m-1 & \text{if } m = 2 \text{ or } m \geq 4, \\
            2 & \text{if } m=3, \\
            0 & \text{if } m=1.
        \end{cases}
    \end{align}
    Let $G = SU(m+1)$ if $m \geq 2$, and $G = U(2)$ if $m=1$. Then $G$ acts smoothly, almost effectively, and isometrically with cohomogeneity one on $(M,g)$. The action preserves the level sets of $f$. The isotropy subgroup of each point in a sufficiently far level set of $f$ is $S(U(m) \times \Z_k)$ if $m \geq 2$, and $U(1) \times \Z_k$ if $m=1$.
\end{corollary}
\begin{proof}
    We have $\bar{d} = (m+1)^2$. Let $d$ be the number on the right-hand side of \eqref{eq:ddd}. We rerun the proof of Theorem \ref{thm:key-symmetry-2}:

    By Proposition \ref{prop:d1-berger}, $\dim\ker_{L^2(e^f)}(L) \leq d \leq d_1(L_{m,k}, \bar{g}_\infty)$ (where $\bar{g}_\infty = g_{\alpha,\beta}$ in the expanding case). Hence, Lemma \ref{lem:exact-KF-subalgebra} implies that the subalgebra $\mathfrak{g}_\rho$ of $\mathfrak{I}(L_{m,k},g_\rho)$ spans the tangent space to $L_{m,k}$ at any point in any sufficiently far level set of $f$. Hence, $(L_{m,k}, g_\rho)$ is homogeneous with
    \begin{align}
        \dim \I(L_{m,k}, g_\rho) \geq \bar{d} - d \geq \begin{cases}
            (m+1)^2 - 2(m+1) + 3 & \text{if } m = 2 \text{ or } m \geq 4 \\
            14 & \text{if } m=3, \\
            4 & \text{if } m=1.
        \end{cases}
    \end{align}
    By Proposition \ref{prop:homog-metrics-Lmk}, $g_\rho$ is a Berger metric, and $\I^0(L_{m,k}, g_\rho) = U(m+1)/\Z_k$ acts transitively with isotropy subgroup as described in the proposition. Since $\mathfrak{g}_\rho$ is a Lie subalgebra of $\mathfrak{I}(L_{m,k},g_\rho) = \mathfrak{u}(m+1)$ with $\dim \mathfrak{g}_\rho \geq \bar{d} - d$, Lemma \ref{lem:prop-subalg-u} implies that $\mathfrak{g}_\rho \in \{\mathfrak{su}(m+1), \mathfrak{u}(m+1)\}$ if $m \geq 2$, or $\mathfrak{g}_\rho = \mathfrak{u}(m+1)$ if $m=1$. Hence, for the subgroup $G_\rho$ of $\I^0(L_{m,k}, g_\rho)$ that $\mathfrak{g}_\rho$ integrates to, we know the following:
    \begin{itemize}
        \item If $m \geq 2$, then $G_\rho$ is the image of either $SU(m+1)$ or $U(m+1)$ under the quotient map $U(m+1) \to U(m+1)/\Z_k$. In the former case, $G_\rho$ still acts transitively on $(L_{m,k}, g_\rho)$ with isotropy subgroup given by the image of $S(U(m) \times \Z_k)$ under the same quotient map.
        \item If $m=1$, then $G_\rho \cong U(2)/\Z_k$.
    \end{itemize}
    The corollary now follows from Lemma \ref{lem:level-set-isos}, and by undoing the $\Z_k$-quotients in the groups involved. (This last step leads to the action being almost effective instead of effective.)
\end{proof}

\subsection{Proof of Theorem \ref{thm:appleton-uniqueness}}

Having now pinned down the cohomogeneity one structure using Corollary \ref{cor:coho-1-berger}, in this final part we sharpen this to Theorem \ref{thm:appleton-uniqueness}. In particular, it remains to describe the metric by an explicit system of ODEs.

\begin{lemma} \label{lem:subgroup-class-unitary}
    \begin{enumerate}[label=(\alph*)]
        \item Let $m \geq 2$. If $K$ is a closed proper subgroup of $SU(m+1)$ that contains $SU(m)$, then $K = S(U(m) \times C)$ for some subgroup $C$ of $U(1)$.
        \item Meanwhile, if $K$ is a closed proper subgroup of $U(2)$ that contains $U(1)$, then either $K = U(1) \times C$ or $K = (U(1) \times U(1)) \rtimes \Z_2$.
    \end{enumerate}
\end{lemma}
\begin{proof}
    We focus on case (a).
    The identity component $K^0$ of $K$ acts canonically on $\C^{m+1}$, and splits $\C^{m+1}$ into irreducible modules which are orthogonal with respect to the standard Hermitian metric. As $K^0$ contains the upper-left $SU(m)$, there are two possibilities:
    \begin{enumerate}[label=(\roman*)]
        \item The decomposition is $\C^m \oplus \C^1$.
        \item The decomposition is $\C^{m+1}$, i.e. $K^0$ acts irreducibly.
    \end{enumerate}
    \textbf{Case (i):} in this case, $K^0$ contains the upper-left $SU(m)$ and is a subgroup of
    \begin{align}
        \{g \in SU(m+1): g(\C^m) = \C^m, g(\C^1) = \C^1\} = S(U(m) \times U(1)).
    \end{align}
    Thus $K^0 = S(U(m) \times C)$ for some subgroup $C$ of $U(1)$. As $K^0$ is connected, the only possibilities are $K^0 = SU(m)$ or $K^0 = S(U(m) \times U(1))$. Since $K^0$ is a normal subgroup of $K$, we have $K \subseteq N_{SU(m+1)}(K^0)$. Using this, we find the possibilities for $K$:
    \begin{itemize}
        \item If $K^0 = SU(m)$, then $N_{SU(m+1)}(K^0) = S(U(m) \times U(1))$ so $K$ is of the form $S(U(m) \times C)$.
        \item If $K^0 = S(U(m) \times U(1))$, then $N_{SU(m+1)}(K^0) = K^0$, so $K = K^0$.
    \end{itemize}
    \textbf{Case (ii):} since $K^0$ contains $SU(m)$, its Lie algebra $\mathfrak{k} \subseteq \mathfrak{su}(m+1)$ contains $\mathfrak{su}(m)$, embedded into the upper-left block. Since $K^0$ is connected and acts irreducibly on $\C^{m+1}$, the induced representation $\mathfrak{k}$ of $\C^{m+1}$ is also irreducible. Hence, there exists $X \in \mathfrak{k}$ with nonzero off-diagonal entries with respect to the vector space decomposition $\C^{m+1} = \C^m \oplus \C^1$, i.e.
    \begin{align}
        X = \begin{bmatrix}
            A & u \\ -u^* & \alpha
        \end{bmatrix}
    \end{align}
    where $A \in \mathfrak{u}(m)$, $u \in \C^{m \times 1}$, and $\alpha \in i\R$, with $\mathrm{diag}(A,\alpha) \in \mathfrak{su}(m+1)$ and $u \neq 0$. Using this and the fact that $\mathfrak{k}$ contains $\mathfrak{su}(m)$, it can be checked by computing commutators that $\mathfrak{k} = \mathfrak{su}(m+1)$. We omit the details. It follows that $K^0 = SU(m+1)$, and $K = SU(m+1)$.

    This proves part (a) of the lemma. Part (b) is proved analogously. The only substantive difference lies in case (i), where in the case $K^0 = U(1) \times U(1)$ we get $N_{U(2)}(K^0) = K^0 \rtimes \Z_2$, so either $K = K^0$ or $K = K^0 \rtimes \Z_2$.
\end{proof}

The following is a direct consequence of Lemma \ref{lem:subgroup-class-unitary}.
\begin{corollary} \label{cor:subgroups-berger}
    \begin{enumerate}[label=(\alph*)]
        \item Let $m \geq 2$ and $k \geq 1$. If $K$ is a closed subgroup of $SU(m+1)$ containing $S(U(m) \times \Z_k)$, and $K/S(U(m) \times \Z_k)$ is diffeomorphic to $\Sph^\ell$ for some $\ell \geq 0$, then $K = S(U(m) \times \Z_{2k})$ or $K = S(U(m) \times U(1))$.
        \item Let $k \geq 1$. If $K$ is a closed subgroup of $U(2)$ containing $U(1) \times \Z_k$, and $K/(U(1) \times \Z_k)$ is diffeomorphic to $\Sph^\ell$ for some $\ell \geq 0$, then $K = U(1) \times \Z_{2k}$ or $K = U(1) \times U(1)$.
    \end{enumerate}
\end{corollary}

By Lemma \ref{lem:homog-presentations}, there are diffeomorphisms $SU(m+1)/S(U(m) \times \Z_k) \cong L_{m,k}$ and $U(2)/(U(1) \times \Z_k) \cong L_{1,k}$. We fix such an identification for the next lemma, which is an analog of Lemma \ref{lem:S-round-unique}.

\begin{lemma} \label{lem:inv-metrics-berger}
    \begin{enumerate}[label=(\alph*)]
        \item Let $m \geq 2$ and $k \geq 1$. Then any $SU(m+1)$-invariant metric on $SU(m+1)/S(U(m) \times \Z_k) \cong L_{m,k}$ is equal to $g_{a,b}$ for some $a,b > 0$.
        \item Let $k \geq 1$. Any $U(2)$-invariant metric on $U(2)/(U(1) \times \Z_k) \cong L_{1,k}$ is equal to $g_{a,b}$ for some $a,b > 0$.
    \end{enumerate}
\end{lemma}
\begin{proof}
    We handle case (a); case (b) is entirely similar.

    Let $G = SU(m+1)$ and $H = S(U(m) \times \Z_k)$. Then $G$-invariant metrics on $G/H$ bijectively correspond to $\mathrm{Ad}(H)$-invariant inner products on $\mathfrak{p}$, where $\mathfrak{g} = \mathfrak{h} \oplus \mathfrak{p}$ is a fixed $\mathrm{Ad}(H)$-invariant decomposition. It is easy to see that $\mathfrak{p}$ further decomposes into irreducible $\mathrm{Ad}(H)$-modules $\mathfrak{p} = \mathfrak{p}_1 \oplus \mathfrak{p}_2$, with $\dim_\R \mathfrak{p}_1 = 1$ and $\dim_\R \mathfrak{p}_2 = 2m$. By Schur's lemma, all $\mathrm{Ad}(H)$-invariant inner products on $\mathfrak{p}$ are of the form
    $$a^2 \inner{\cdot}{\cdot}_{\mathfrak{p}_1} + b^2 \inner{\cdot}{\cdot}_{\mathfrak{p}_2},$$
    where $a,b > 0$ and $\inner{\cdot}{\cdot}_{\mathfrak{p}_i}$ is the Euclidean inner product on $\mathfrak{p}_i$. Tracing back identifications, this corresponds to the metric $g_{a,b}$ on $L_{m,k}$. The lemma follows.
\end{proof}

We finally proceed to proving the main result of this section.
\begin{proof}[Proof of Theorem \ref{thm:appleton-uniqueness}]
    Let $(M,g,f)$ be an asymptotically $L_{m,k}$-cylindrical steady GRS or an asymptotically $(L_{m,k}, g_{\alpha,\beta})$-conical expanding GRS, with $m \geq 1$, $k \geq 3$, and $\alpha,\beta > 0$. Suppose that
    \begin{align}
        \dim\ker_{L^2(e^f)}(L) \leq \begin{cases}
            2m-1 & \text{if } m=2 \text{ or } m \geq 4, \\
            2 & \text{if } m=3, \\
            0 & \text{if } m=1.
        \end{cases}
    \end{align}
    By Corollary \ref{cor:coho-1-berger} and Proposition \ref{prop:coho-1-structure}, we get the following.
    \begin{enumerate}[label=(\roman*)]
        \item Outside a submanifold $X$, we can identify
        \begin{align}
            (M \setminus X, g) = \left( (0,\infty) \times G/H, ds^2 + g(s) \right),
        \end{align}
        where $\{g(s)\}_{s>0}$ are $G$-invariant metrics on a fixed copy of $G/H \cong L_{m,k}$. The groups $G$ and $H$ are given by $G = SU(m+1)$, $H = S(U(m) \times \Z_k)$ if $m \geq 2$, and $G = U(2)$, $H = U(1) \times \Z_k$ if $m=1$.
        \item $f$ is a function of $s$, and $f$ is constant on $X$.
        \item There is a closed subgroup $K$ of $G$ containing $H$ such that $K/H \cong \Sph^\ell$ for some $\ell \geq 0$, and $X \cong G/K$.
        \item From the description of $(M,g)$ as a fiber bundle over $X$, the metric spaces $(G/H,g(s))$ converge in the Hausdorff sense to $X$ as $s \to 0$.
    \end{enumerate}
    From (i) and Lemma \ref{lem:inv-metrics-berger}, we have $g(s) = g_{a(s),b(s)}$ for some positive functions $a,b: (0,\infty) \to \R$. Combining this with (ii) and the computations in \cite{appleton}*{Appendix A}, the triple $(f,a,b)$ satisfies \eqref{eq:appleton-system-f}--\eqref{eq:appleton-system-b}. Moreover, if $(M,g,f)$ is asymptotically $L_{m,k}$-cylindrical, then
    \begin{align} \label{eq:ab-limit-2}
        \lim_{s\to\infty} \frac{a(s)}{b(s)} = 1,
    \end{align}
    and if $(M,g,f)$ is asymptotically $(L_{m,k},g_{\alpha,\beta})$-conical, then
    \begin{align} \label{eq:ab-limit-3}
        \lim_{s \to \infty} \frac{a(s)}{s} = \alpha, \quad \lim_{s \to \infty} \frac{b(s)}{s} = \beta.
    \end{align}
    
    It remains to examine the boundary conditions. By (iii) and Corollary \ref{cor:subgroups-berger}, there are two possibilities for $K$ and $X$:
    \begin{enumerate}[label=(\alph*)]
        \item $K = S(U(m) \times \Z_{2k})$ (if $m \geq 2$) or $K = U(1) \times \Z_{2k}$ (if $m=1$), and $X \cong L_{m,2k}$.
        \item $K = S(U(m) \times U(1))$ (if $m \geq 2$) or $K = U(1) \times U(1)$ (if $m=1$), and $X \cong \CP^m$.
    \end{enumerate}
    However, the first option is incompatible with (iv) because it is impossible for $(G/H,g_{a(s),b(s)})$ to converge in the Hausdorff sense to a metric space homeomorphic to $L_{m,2k}$. Indeed, any change in topology must arise from sending $a$ or $b$ to zero, which necessarily results in a dimension drop. Hence the second option holds and $X \cong \CP^m$.

    For $(G/H,g_{a(s),b(s)})$ to converge in the Hausdorff sense to a metric on $\CP^m$, we need $a(s) \to 0$ and $b(s) \to b(0) > 0$ as $s \to 0$. For the metric to close up smoothly at $X$, we adapt the discussion in \cite{verdiani-ziller}*{\S 3} to obtain $a'(0) = k$ and $b'(0) = 0$ as necessary conditions. As $f$ is a function of the radial coordinate $s$ on fibers, we also see by restricting to a single fiber that smoothness requires $f'(0) = 0$.
    Thus, the system \eqref{eq:appleton-system-f}--\eqref{eq:appleton-system-b} holds with boundary conditions given by \eqref{eq:appleton-boundary}. If $(M,g,f)$ is an asymptotically $L_{m,k}$-cylindrical steady GRS, then this and \eqref{eq:ab-limit-2} imply that $(M,g,f) \in \mathcal{A}_{m,k}^s$ (as defined in \S\ref{subsec:uniq-appl}). If $(M,g,f)$ is an asymptotically $(L_{m,k}, g_{\alpha,\beta})$-conical expanding GRS, then this and \eqref{eq:ab-limit-3} imply that $(M,g,f) \in \mathcal{A}_{m,k}^e(\alpha,\beta)$.
\end{proof}

\bibliography{Refs}
\end{document}